\newtheorem{theorem}{Theorem}
\newtheorem{remark}{Remark}
\newtheorem{example}{Example}
\def\ve{\varepsilon}
\def\vr{\varepsilon}
\def\ds{\displaystyle}
 \DeclareMathOperator\erfc{erfc}
\begin{document}

 \title{\bf Numerical approximations to a singularly perturbed convection-diffusion problem with a discontinuous initial condition\thanks{This research was partially supported  by the Institute of Mathematics and Applications (IUMA), the projects PID2019-105979GB-I00 and PGC2018-094341-B-I00 and the Diputaci\'on General de Arag\'on (E24-17R).}}
\author{J.L. Gracia\thanks{Department of Applied Mathematics, University of
Zaragoza, Spain.\  email: jlgracia@unizar.es} \and E.\ O'Riordan\thanks{School of Mathematical Sciences, Dublin City
University, Dublin 9, Ireland.\ email: eugene.oriordan@dcu.ie}}

\maketitle

\begin{abstract}

A singularly perturbed parabolic problem of convection-diffusion type with a discontinuous initial condition is examined. An analytic function is identified which matches the discontinuity in the initial condition and also satisfies the homogenous parabolic differential equation associated with the problem. The difference between this analytical function and the solution of the parabolic problem is approximated numerically, using an upwind finite difference operator combined with
an appropriate layer-adapted mesh.   The numerical method is shown to be parameter-uniform. Numerical results are presented to illustrate the theoretical error bounds established in the paper.

\

\noindent {\bf Keywords:} Convection diffusion, discontinuous initial condition, interior layer, Shishkin mesh.

\noindent {\bf AMS subject classifications:}  65M15, 65M12, 65M06
\end{abstract}

\section{Introduction}

In this paper, we examine a singularly perturbed convection-diffusion problem with a discontinuous initial condition. Throughout, we assume that the convective coefficient multiplying the first spatial derivative of the solution (denoted below by $u(x,t)$) is smooth, strictly positive and depends solely on the time variable $t$. Under this assumption, an explicit discontinuous function (denoted below by $S(x,t)$) can be identified which captures the nature of the singularity associated with the discontinuous initial condition. Asymptotic expansions for the solution $u(x,t)$, involving this singularity, were constructed in \cite{bobisud}.
 When we subtract off this singular function the remainder $y(x,t):=u(x,t) -S(x,t)$ is again the solution of  a singularly perturbed convection-diffusion problem. However, although the remainder $y$  satisfies the same singularly perturbed partial differential equation as $u$,  the  initial condition $y(x,0)$ is  now continuous. In this paper, we construct and analyze a numerical method that produces parameter-uniform \cite{fhmos} numerical approximations to this remainder $y$.

 In  \cite{jcam 2020}, we examined a set of related singularly perturbed reaction-diffusion problems with  discontinuities in either the boundary or the initial condition. In this paper, we extend this technique to a convection-diffusion problem with a discontinuous initial condition. In this case, the location of the interior layer (generated by the discontinuity in the initial condition)   moves in time and, in addition, this interior layer can eventually merge into a boundary layer.

Shishkin \cite{shishkin4} constructed and analysed a numerical method for the problem examined below, in the case where the initial condition is continuous, but has a discontinuity in the first derivative. The error bound in \cite[(5.23)]{shishkin4} essentially coincides with the error bound presented below in Theorem 1. Hence,
the presence of the complimentary error function in the problem formulated here does not significantly alter the final error bound established in \cite[Theorem 8]{shishkin4}.

In the more general case where the convective coefficient can depend on both space and time, remainder $y$ contains a strong interior layer
and the numerical algorithm presented in this paper will not suffice (see Example~\ref{ex5} in \S \ref{sec:numer-extension})  to generate parameter-uniform approximations. In a companion paper \cite{arxiv} to the current paper, we present a different numerical algorithm to manage this more general case.  Here we show that a simpler algorithm (to the algorithm in \cite{arxiv}) suffices when the convective coefficient does not depend on the spatial variable $x$.

In   \S2, we define the continuous problem to be examined, define the singular function $S(x,t)$  and present a priori bounds on the derivatives of the remainder term $y(x,t)$. In \S 3, we construct a numerical method and establish a parameter-uniform  error bound on the associated numerical approximations.  In \S 4,  we present some numerical results to illustrate the performance of the numerical method and to support the theoretical error bounds.  Some technical details are available in the Appendix.

{\bf Notation:} Throughout the paper, $C$ denotes a generic constant that is independent of the singular perturbation parameter $\vr$ and all the discretization parameters. The $L_\infty$ norm on the domain $D$ will be denoted by $\Vert \cdot \Vert_D$. We also define the jump of a function at a point $d$ by  $ [\phi ] (d) := \phi (d^+) - \phi (d^-) $.

\section{Continuous problem}

Consider the following singularly perturbed parabolic convection-diffusion problem:
 Find $ u$  such that \footnote{ As in \cite{friedman}, we define the space ${\mathcal C}^{0+\gamma}(D )$, where $D \subset \mathbf{R}^2$ is an open set, as the set of all functions that are H\"{o}lder continuous of degree $\gamma \in (0,1) $ with respect to the metric $\Vert \cdot \Vert, $
where for all ${\bf p}_i=(x_i,t_i),  \in \mathbf{R}^2, i=1,2; \
\Vert {\bf p}_1- {\bf p}_2 \Vert^2  = (x_1-x_2)^2 + \vert t_1 -t_2 \vert$.
For $f$ to be in ${\mathcal C}^{0+\gamma}(D ) $ the following semi-norm needs to be finite
\[
\lceil f \rceil _{0+\gamma , D} = \sup _{{\bf p}_1
 \neq {\bf p}_2, \ {\bf p}_1, {\bf p}_2\in D}
\frac{\vert f({\bf p}_1) - f({\bf p}_2) \vert}{\Vert {\bf p}_1- {\bf p}_2 \Vert ^\gamma} .
\] The space ${\mathcal C}^{n+ \gamma}(D ) $ is defined by
\[
{\mathcal C}^{n+\gamma }( D ) = \left \{ z : \frac{\partial ^{i+j} z}{
\partial x^i
\partial t^j } \in {\mathcal C}^{0+\gamma }(D), \ 0 \leq i+2j \leq n \right \},
\]
and $\Vert \cdot \Vert _{n + \gamma}, \ \lceil \cdot \rceil _{n+\gamma}$ are the associated norms and semi-norms.
}
\begin{subequations}\label{problem2}
\begin{align}
 & L  u := -\ve {u}_{xx} +  a(t) {u}_x +{u}_t= 0, \quad (x,t) \in  Q:=(0,1)\times (0,T],\label{1a} \\
&{u}(x,0)=\phi (x), \, 0 \leq x \leq 1;  \   [\phi ](d)\neq 0,\ 0 < d =O(1) <1; \label{1b} \\
&{u}(0,t)=0= {u}(1,t),  \ 0 < t \leq T,\label{1c}\\
& a(t) > \alpha >0, \ 0 \leq t \leq T, \quad  a \in C^{4+\gamma } (\bar{ Q} ),   \label{1d}\\
&\ \ \phi \in C^4((0,1) \setminus \{d \});\qquad \phi ^{(2i)} (0)=\phi ^{(2i)}(1) =0;\quad i=0,1,2. \label{comp0}
\end{align}
\end{subequations}
The assumption of the compatibility conditions (\ref{comp0}) ensures that  no classical singularity appears near the  end points
 $(0,0), (1,0)$. Observe that the initial function $\phi(x)$ is discontinuous at $x=d$. This will cause an interior layer to appear in the solution, near the point $(d,0)$, which will be convected into the interior of the domain
along the characteristic curve
\[
\Gamma ^* := \{ (d(t),t) | \, d'(t) =  a(t), \ 0<d(0)=d<1 \}
\]
associated with the reduced first order differential equation.
Define the continuous function
\begin{equation}\label{def-S}
 y (x,t) :=  u(x,t) - S(x,t), \qquad S(x,t):= 0.5 [\phi ](d) \psi _0 (x,t),
\end{equation}
where
\[
 \psi _0 (x,t) :=\erfc \left( \frac{d(t)-x}{2\sqrt{\ve t}} \right) \quad \hbox{and} \quad \erfc(z) :=\frac{2}{\sqrt{\pi}} \int _{r=z}^\infty e^{-r^2} \ dr.
\]
This function $ y$ satisfies the problem
\begin{subequations}\label{cont-problem}
\begin{eqnarray}
& L  y= 0, \ (x,t) \in  Q, \qquad
 y(x,0)=\left\{ \begin{array}{lll}
\displaystyle
\phi (x), & x < d, \\[1ex]
\displaystyle
\phi (d^-)  , & x = d, \\[1ex]
\displaystyle
\phi (x) -[\phi ](d), & x > d. \end{array}\right.
\\
&y(0,t) =  -0.5 [\phi ](d)  \psi _0 (0,t), \quad 0 < t \leq T,
\\ &  y(1,t) =  -0.5 [\phi ](d)  \psi _0 (1,t), \quad  0 < t \leq T. \label{right-bc}
\end{eqnarray}
\end{subequations}
As $d=O(1)$ we see that all time derivatives of the left boundary condition $ y(0,t)$ are uniformly bounded.
To begin, we shall assume that the final time $T$ is constrained as follows: there exists some $\delta > 0$ such that
\begin{equation}\label{assump-temp}
d(T) \leq 1-\delta .
\end{equation}
In \S \ref{sec:LayerInteract},  we explain what modifications are required in the numerical method when this constraint is not applied.
Note that, by assuming (\ref{assump-temp}), all time derivatives of the right boundary condition $ y(1,t)$ are uniformly bounded (w.r.t. $\ve$) for all $t \leq T$.

From the Appendix, we have the expansion (\ref{expansion})
\[
 y= 0.5 \sum _{i=1}^4 [\phi ^{(i)}](d) \frac{(-1)^i}{i!}   \psi _i (x,t) +   R(x,t), \quad \psi _i \in  C^{i-1+\gamma } (\bar {  Q}),
\]
where the weakly singular functions $ \psi _i $ are defined by (\ref{zero}), (\ref{one}) and (\ref{singular-functions}). Bounds on the partial derivatives of the function $ \psi _i (x,t), i=1,2,3,4$ are stated in (\ref{bounds-singular-functions}), (\ref{more-bounds-singular-functions}).
In the case where the convective coefficient $ a$ is independent of the space variable, we have that
\begin{eqnarray*}
 L  \psi _i =0, \quad  0\le i \le 4  \ \hbox{ and } \\   L  R(x,t)=0, \ (x,t) \in   Q,\  R(x,t) \neq 0, \ (x,t) \in \partial  Q.
\end{eqnarray*}
As $ R(x,0) \in C^4(0,1)$ we also have
\[
  R \in C^{4+\gamma}(\bar{ Q}).
\]
Following the arguments in {\cite[Theorem 1]{dervs-parabolic-conv-diff}}, we have the following decomposition of the smooth remainder
\[
 R =  v_R +  w
\]
into a regular component $ v_R$ and a boundary layer component $ w$. In addition, assuming (\ref{assump-temp}),
 we have the following bounds
\begin{subequations}\label{bnds-comp}
\begin{align}
 \left \Vert \frac{\partial ^{i+j}  v_R }{\partial x ^i \partial t ^j }   \right \Vert _{\bar{ Q}}& \leq  C,\  0 \leq i+j \leq  2, \qquad
\left \Vert \frac{\partial ^{3}  v_R }{\partial x ^3  }  \right \Vert _{\bar{ Q}} \leq  \frac{C}{\ve}, \label{bnds-vR} \\
\left \vert \frac{\partial ^{i+j} }{\partial x ^i \partial t ^j }  w(x,t)\right \vert  & \leq  C\ve ^{-i}(1+\ve ^{1-j})  e^{-\alpha(1-x)/\ve},\quad
0 \leq i+2j \leq  4. \label{bnds-w}
\end{align}
\end{subequations}
Then, combining these bounds with the bounds on the singular functions $ \psi _i$ (see (\ref{bounds-singular-functions}), (\ref{more-bounds-singular-functions}) in  the Appendix for details), we have, assuming (\ref{assump-temp}),
\begin{subequations}\label{bnds-regular}
 \begin{align}
  y  &=  v +  w - 0.5  [\phi'](d)  \psi _1 (x,t),\\
 \hbox{where} \qquad
 v&:=  v_R + 0.5 \sum _{i=2}^4 [\phi ^{(i)}](d) \frac{(-1)^i}{i!}  \psi _i (x,t) \
\\
\hbox{and} \qquad    \left \Vert \frac{\partial ^{i+j}  v}{\partial x ^i \partial t ^j }   \right \Vert   _{\bar{ Q}} &\leq   C, \quad 0 \leq i+2j \leq  2, \\
 \left \vert \frac{\partial ^{2}  }{ \partial t ^2 }  v (x,t)\right \vert    \leq   C\left (1+\frac{\ve}{t}\right), &
\left \vert \frac{\partial ^{3}  }{\partial x ^3  }  v (x,t)\right \vert   \leq  C \left(\frac{1}{\ve}+\frac{1}{\sqrt{\ve t}} \right).
\end{align}
\end{subequations}

\begin{remark}
If $[\phi'](d)=0$, then the function $ y$ is decomposed  simply as $ y  =  v +  w $ and this will have an influence on the order of convergence of the numerical scheme~\eqref{discr-probl}. In this case, it is proved in Theorem~\ref{th_a(t)} that the method converges with almost first order. Otherwise, when $[\phi ' ](d)\neq 0$, the error bound is dominated by the term $ CM^{-1/2}$, which corresponds to the result in \cite{shishkin4}.
\end{remark}

\section{Numerical method and associated error analysis}

 In this section  problem~\eqref{cont-problem} is approximated using  the backward  Euler method and standard central differences on a Shishkin mesh~\cite{fhmos}. Global parameter-uniform error bounds are proved for this scheme.
 Two cases are considered in our error analysis: In \S \ref{sec:LayerNoInteract} the interior layer does not interact with the boundary layer at $x=1$ but it does interact in \S \ref{sec:LayerInteract}.

\subsection{Interior and boundary layers do not interact with each other} \label{sec:LayerNoInteract}

 In this section we assume that~\eqref{assump-temp} is satisfied. Then, the interior layer emanating from $x=d$ and travelling along the characteristic $\Gamma^*$ does not interact with the boundary layer in the vicinity of $x=1.$

Let  $N$ and $M=O(N)$ be two positive integers. We approximate problem~\eqref{problem2} with a finite difference scheme on a mesh  $ \bar Q^{N,M}=\{x_i\}^N_{i=0} \times \{t_j\}_{j=0}^M $.  We denote by $\partial Q^{N,M}:=\bar Q^{N,M}\backslash Q.$
The mesh $\bar Q^{N,M}$ incorporates a uniform mesh  ($t_j:=k j$ with $k=T/M$) for the time variable and a piecewise-uniform mesh for the space variable with $h_i:= x_i-x_{i-1}$.  The piecewise uniform mesh $\{x_i\}^N_{i=0}$ is a Shishkin mesh \cite{fhmos} which splits the interval $[0,1]$ into the two subintervals
\[
[0,1-\sigma]\cup [1-\sigma ,1], \quad \hbox{where} \quad \sigma : =\min \left \{0.5, \frac{\vr}{\alpha}  \ln N \right\}.
\]
The $N$ space mesh points are distributed in the ratio $N/2:N/2$ across the two subintervals.
The discrete problem\footnote{We use the following notation for the finite difference approximations of the derivatives:
\begin{align*}
D^-_t Y (x_i,t_j) := \ds\frac{Y(x_i,t_j)-Y(x_i,t_{j-1})}{k}, \quad
D^-_x Y(x_i,t_j) :=\ds\frac{Y(x_i,t_j)-Y (x_{i-1},t_j)}{h_i}, \\
D^+_x Y (x_i,t_j)  :=\ds\frac{Y(x_{i+1},t_j)-Y(x_i,t_j)}{h_{i+1}}, \
 \delta^2_x Y(x_i,t_j) := \ds\frac{2}{h_i+h_{i+1}}(D_x^+Y(x_i,t_j)-D^-_x Y(x_i,t_j)).
 \end{align*}}
 is: Find $Y$ such that
\begin{subequations}\label{discr-probl}
\begin{align}
L^{N,M} Y &:=  -\ve \delta ^2_x  Y  + a D^-_x Y + D^-_t Y =0,\quad t_j >0,  \\
Y (x_i,0) & = y (x_i,0), \ 0 <x_i<1,\   Y(0,t_j) =   Y(1,t_j) = 0,\ t _j \geq 0.
\end{align}
\end{subequations}

\begin{remark}\label{method-general}
The method and the analysis presented here for problem (\ref{problem2}) can be easily extended to a wider class of problems.  Consider the more general problem
\begin{subequations}\label{problem-general}
\begin{align}
 &L_g  u := -\ve {u}_{xx} +  a(t) {u}_x +  b(t) {u}+{u}_t= f, \ (x,t) \in  Q , \\
& u =  g \in \partial Q:=\bar Q\setminus Q, \quad [u](d,0) \neq 0;\\
&b(t) \geq 0,\  0 \leq t \leq T.
\end{align}
\end{subequations}
As the problem is linear we can write the solution  as the sum
\begin{align*}
 u &= e^{-\int _0^t b(r) dr} u_h +  u_p,   \quad \hbox{where}
\\
 L  u_h & =0 ,\qquad   L_g  u _p= f, (x,t) \in  Q; \\
 u_h(x,0) & = \phi (x);\quad  u_p(x,0) = u(x,0) -\phi (x);\\
 u_h(0,t) & =  u_h(1,t) =0; \quad  u_p(0,t) =  u(0,t),\  u_p(1,t) =  u(1,t).
\end{align*}
In addition to the constraint (\ref{1d}),  we assume that $b,f$ are sufficiently regular and that sufficient compatibility is imposed at the points $(0,0), (1,0)$ so that $u_p \in C^{4+\gamma}(\bar Q)$.  The component $u_h$ satisfies problem (\ref{problem2}) and it will examined below. The component $ u_p$ can be decomposed into a regular and boundary layer component as in  \cite{grey}.
For problem (\ref{problem-general}), we first subtract off the term
\[
 y (x,t) :=  u(x,t) - S_g(x,t), \qquad S_g(x,t):= 0.5 [\phi ](d) e^{-\int _0^t b(r) dr}\psi _0 (x,t),
\]
then $
L_gy=f, (x,t) \in Q$ and $y=u-S_g,\ (x,t) \in \partial Q$.
The corresponding discrete problem is then given by
\begin{subequations}\label{discr-problem-general}
\begin{align*}
L_g^{N,M} Y &:=  -\ve \delta ^2_x  Y  + a D^-_x Y +bY +  D^-_t Y =f,\quad (x_i,t_j) \in Q^{N,M},  \\
Y (x_i,0) & = y (x_i,0), \quad (x_i,t_j) \in \partial Q^{N,M};
\end{align*}
\end{subequations}
where the mesh  $Q^{N,M}$ is as described earlier.
\end{remark}

We form a global approximation $\bar Y$ using simple  bilinear interpolation:
\[
\bar Y(x,t) : = \sum _{i=0,j=1}^{N,M} Y(x_i,t_j) \varphi _i(x)  \eta_j(t)
\]
where $\varphi _i(x)$ is the standard hat function centered at $x=x_i$ and $ \eta _j(t) :=(t-t_{j-1})/k, \, t \in [t_{j-1},t_j), \,  \eta _j(t) :=0, \, t \not \in [t_{j-1},t_j)$.

\begin{theorem} \label{th_a(t)} Assume (\ref{assump-temp}) and $M=O(N)$.
If $Y$ is the solution of (\ref{discr-probl}) and $y$ is the solution of (\ref{cont-problem}), then
\begin{eqnarray*}
\Vert  \bar Y- y \Vert _{\bar Q} \leq CN^{-1}\ln N
+ C\vert [\phi ' ](d) \vert  M^{-1/2}.
\end{eqnarray*}
\end{theorem}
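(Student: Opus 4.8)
The plan is to split the global error $\bar Y - y$ in the usual way as $(\bar Y - \bar y) + (\bar y - y)$, where $\bar y$ is the bilinear interpolant of the exact solution $y$ on the Shishkin mesh. The interpolation error $\bar y - y$ is handled by standard estimates for bilinear interpolation applied to the decomposition $y = v + w - 0.5[\phi'](d)\psi_1$ in \eqref{bnds-regular}. The smooth part $v$ contributes $CN^{-2} + C k$ from its bounded second derivatives (with the mild $\varepsilon/t$ weight in $v_{tt}$ causing no harm since $k = T/M$ and the weight is integrable in the relevant sense), the layer part $w$ contributes $CN^{-1}\ln N$ because of the standard resolution of the $e^{-\alpha(1-x)/\varepsilon}$ layer on the fine mesh and the $N^{-2}\ln^2 N$ estimate on the coarse mesh, and the term involving $\psi_1$ contributes the unavoidable $C|[\phi'](d)|M^{-1/2}$: here the key point, recorded in the Appendix bounds \eqref{bounds-singular-functions}--\eqref{more-bounds-singular-functions}, is that $\psi_1$ has only limited regularity ($\psi_1 \in C^{\gamma}$), so its time-interpolation error is governed by a $\sqrt{k}$-type modulus of continuity rather than by $k$.

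The remaining and more substantial task is to bound $\bar Y - \bar y$, equivalently $Y - y$ at the mesh points, which is a consistency-plus-stability argument for the scheme \eqref{discr-probl}. First I would establish a discrete comparison principle for $L^{N,M}$: on the given mesh the operator has the right sign pattern (positive diagonal, nonpositive off-diagonal, weak diagonal dominance) provided the upwinding on the convective term and the backward Euler discretization are used, so a discrete maximum principle holds and barrier-function arguments are available. Then I would write the truncation error $L^{N,M}(Y - y) = -L^{N,M} y$ (since $Ly = 0$) and estimate $L^{N,M}y - Ly$ componentwise using the derivative bounds for $v$, $w$, and $\psi_1$. The spatial central-difference and time backward-Euler consistency errors for the smooth component give $C(N^{-2} + k + \varepsilon^{-1}(\text{mesh width})^2 \cdot \|v_{xxx}\|)$ type contributions, which on the coarse part of the Shishkin mesh (width $O(N^{-1})$) combine with the $\|v_{xxx}\| \le C(\varepsilon^{-1} + (\varepsilon t)^{-1/2})$ bound; the factor $\varepsilon$ multiplying $\delta_x^2$ tames the $\varepsilon^{-1}$ and the $(\varepsilon t)^{-1/2}$ singularity is integrated against the time levels to yield at worst a logarithmic loss, and the upwind term $a D_x^- v$ contributes $C H \|v_{xx}\|$, i.e.\ $CN^{-1}$ on the coarse mesh. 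The layer component $w$ is handled by the classical Shishkin-mesh analysis: on the fine mesh the consistency error is $C\varepsilon^{-1} H_{\text{fine}} \le C N^{-1}\ln N$ from the first-order upwind term, and away from the layer $w$ is exponentially small. For the $\psi_1$ term one uses its coarser bounds from the Appendix together with the fact that $L\psi_1 = 0$ and that the associated truncation error, after summing the backward-Euler-in-time contribution, is $O(M^{-1/2})$ up to the prefactor $|[\phi'](d)|$.

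The main obstacle I anticipate is the clean treatment of the $\psi_1$ contribution and, relatedly, the time-direction truncation error near $t = 0$, where the weak singularities $\varepsilon/t$ and $(\varepsilon t)^{-1/2}$ in the bounds for $v$ and the limited Hölder regularity of $\psi_1$ must be absorbed without destroying the $N^{-1}\ln N$ rate. The standard device is to estimate the accumulated time-consistency error by a telescoping/summation-by-parts argument: rather than bounding $|D_t^- y - y_t|$ pointwise (which blows up like $t^{-1}$), one bounds the contribution of the first few time levels directly using the $C^{\gamma}$-modulus of $\psi_1$ (giving $\sum_j$ of $k^{\gamma}$-type terms that sum to $O(k^{1/2})$ when combined with the barrier) and uses the smoother bounds for $t$ bounded away from $0$. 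One then feeds these truncation-error bounds into the discrete maximum principle via suitable barrier functions — a polynomial-in-$x$ barrier for the smooth part, the discrete analogue $\prod(1 + \alpha h_i/\varepsilon)^{-1}$ of $e^{-\alpha(1-x)/\varepsilon}$ for the layer part — to obtain $\|Y - y\|_{\bar Q^{N,M}} \le CN^{-1}\ln N + C|[\phi'](d)|M^{-1/2}$, and combine with the interpolation estimate and $M = O(N)$ to conclude.
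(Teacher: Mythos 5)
Your overall framework --- the decomposition $y=v+w-0.5\,[\phi'](d)\psi_1$ with a matching discrete decomposition, truncation-error-plus-discrete-maximum-principle estimates with a layer barrier for $w$, an $L^1$-in-time accumulation of the truncation error to absorb the $\varepsilon/t$ and $(\varepsilon t)^{-1/2}$ weights in the bounds for $v$, and a final bilinear-interpolation step --- is the same route the paper takes. The genuine gap is in the one place where the theorem's characteristic $M^{-1/2}$ term has to be earned: the component $\psi_1-\Psi$. You simply assert that the summed truncation error for this component is $O(M^{-1/2})$, and you locate the difficulty at $t=0$, proposing to handle it via the $C^{0+\gamma}$ modulus of $\psi_1$ over ``the first few time levels''. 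That does not work, for two reasons. First, a H\"older-$\gamma$ argument yields a rate tied to $\gamma$ (an arbitrary exponent in $(0,1)$), not the exponent $1/2$; the $1/2$ comes from the specific $\sqrt{\varepsilon t}$ scaling of $\psi_1$ and of the Gaussian $E$. Second, and more importantly, the difficulty is not confined to $t\approx 0$: the space derivatives of $\psi_1$ behave like $(\varepsilon t)^{-1/2}E_\gamma(x,t)$ near the moving curve $x=d(t)$, so when $\varepsilon\le CM^{-1}$ the truncation error for $\psi_1$ is $O(1)$ at \emph{every} time level at which $d(t_j)$ lies within the Gaussian width of the fixed node $x_i$; no barrier or maximum-principle manipulation shrinks those individual contributions.

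What rescues the rate in the paper is a counting argument absent from your sketch: split into the cases $\varepsilon\le CM^{-1}$ and $\varepsilon\ge CM^{-1}$; in the first case sharpen the bounds by working with $\psi_1$ for $x\le d(t)$ and with $\psi_1-2(d(t)-x)$ for $x\ge d(t)$ (both exponentially small away from the front), bound the truncation error by Gaussians $e^{-CM(x_i-d(t_j))^2/T}$, and then exploit $d'(t)=a(t)\ge\alpha>0$ (available precisely because $a$ is independent of $x$, which is also what makes $L\psi_1\equiv 0$ exact) to show that for each fixed $x_i$ the sum over all time levels of these Gaussians is at most $CM^{1/2}$, as in \eqref{error-TE-Psi0-sum}; multiplying by $k=T/M$ gives the bound \eqref{error-psi0}. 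In the complementary case $\varepsilon\ge CM^{-1}$, the $M^{-1/2}$ comes instead from integrating the $\sqrt{\varepsilon/t}$ and $(\varepsilon t)^{-1/2}$ derivative bounds \eqref{bound-C}--\eqref{bound-E} over the time levels, with the case assumption used to absorb the $1/\sqrt{\varepsilon}$ factors. Without this two-case split and the front-crossing (Gaussian-sum) estimate, your argument cannot produce the stated $C\vert[\phi'](d)\vert M^{-1/2}$ term; the remaining pieces of your plan (the $v$ and $w$ components, and the interpolation step, for which only bounded first time derivatives plus the same two-case treatment of $\psi_1$ are needed) do match the paper's proof.
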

\begin{proof}
As in the case of the continuous problem, the discrete solution can be decomposed into the sum $Y = V+W- 0.5\ [\phi ' ](d)\Psi$, where
\begin{align*}
L^{N,M}V &=Lv,\ (x_i,t_j) \in {Q^{N,M}} \quad \hbox{and} \quad V= v,\ ({ x_i},t_j) \in  { \partial Q^{N,M};} \\
 L^{N,M}W &=0,\ (x_i,t_j) \in { Q^{N,M}} \quad \hbox{and} \quad  W= w,\ ({x_i},t_j) \in  { \partial Q^{N,M};} \\
  L^{N,M}\Psi &=0,\ (x_i,t_j) \in { Q^{N,M}} \quad \hbox{and} \quad  \Psi= \psi _1,\ ({ x_i},t_j) \in  { \partial Q^{N,M}.}
\end{align*}
Using the bounds on the derivatives (\ref{bnds-w}) of the component $ w$, truncation error bounds, discrete maximum principle, a suitable discrete barrier function and following the arguments in \cite{ria}, we can establish the following bounds
\begin{equation} \label{error-w}
 \vert ( w- W)(x_i,t_j) \vert  \leq  C N^{-1} \ln N  + CM^{-1}, \quad   (x_i,t_j)\in\bar Q^{N,M}.
\end{equation}
We next bound the error due to the regular component $ v$.
Note that if the truncation error is denoted by ${\cal T}_{i,j}:= L^{N,M} ( v -V)(x_i,t_j)$, then
\begin{align*}
 \vert {\cal T}_{i,j} \vert  &\leq C \ve (h_i+h_{i+1}) \left\Vert \frac{\partial ^3  v (x,t_j)}{\partial x^3} \right\Vert _{(x_{i-1},x_{i+1}) } \\
 &+C \min \left\{ h_i\left\Vert \frac{\partial ^2  v(x,t_j)}{\partial x^2} \right\Vert _{(x_{i-1},x_{i}) }, \left\Vert \frac{\partial   v(x,t_j)}{\partial x} \right\Vert _{(x_{i-1},x_{i}) }  \right \}
 \\
 &+
C\min \left\{ \frac{1}{k} \int _{w=t_{j-1}}^{t_j} \int _{r=w}^{t_j}  \left\vert \frac{\partial ^2  v(x_i,r)}{\partial t^2} \right\vert dr \ dw ,    \left\Vert \frac{\partial   v(x_i,t)}{\partial t} \right\Vert _{  (t_{j-1}, t_j )}  \right\},
\end{align*}
as
\[
\vert D_t^-  v(x_i,t_j) \vert  \leq \frac{1}{k} \int _{r=t_{j-1}} ^{t_j} \left \vert \frac{\partial  v(x_i,r)}{\partial r}   \ dr \right \vert \leq C \left \Vert \frac{\partial   v(x_i,t)}{\partial t} \right \Vert _{  (t_{j-1}, t_j )}.
\]
Hence, using the bounds (\ref{bnds-regular}) on the derivatives of $ v$,   we obtain
\begin{subequations}\label{error-TE-v}
\begin{align}
\vert {\cal T}_{i,1} \vert &  \leq C  \qquad \hbox{and} \\
\vert {\cal T}_{i,j} \vert &  \leq  C \left (1+\frac{\sqrt{\vr}}{\sqrt{t_j}}\right) N^{-1}  + C k \left \Vert \frac{\partial^2  v (x_i,t)}{\partial t^2} \right \Vert _{ (t_{j-1}, t_j )}  \nonumber \\
& \leq  C \left (1+\frac{\sqrt{\vr}}{\sqrt{t_j}}\right) N^{-1}   + CM^{-1} \left(1+\frac{\vr }{t_{j-1}}\right),\quad t_j > t_1.
\end{align}
\end{subequations}
We now mimic  the argument in \cite{Zhemukhov1} and note that at each time level,
\[
\left(-\ve \delta ^2_x    + a D^-_x  +\frac{1}{k}I \right)  ( v -V)(x_i,t_j) =  {\cal T}_{i,j} + \frac{1}{k}  ( v -V)(x_i,t_{j-1}), \ t_j >0 .
\]
From this  and~\eqref{error-TE-v}, we  deduce the error bound
\begin{align}
\vert  ( v -V)(x_i,t_j) \vert &\leq C k \sum _{n=1}^j \vert {\cal T}_{i,n} \vert  \leq C M^{-1}+ k \sum _{n=2}^j \vert {\cal T}_{i,n} \vert \nonumber \\
&{ \leq C (N^{-1} +M^{-1})+C M^{-3/2}\sqrt{\vr} \int_{s=1}^j \frac{ds}{\sqrt{s}}+C M^{-1}\ve \int_{s=1}^j\frac{ds}{s} } \nonumber\\
&\leq   C N^{-1} +C M^{-1} (1+ \ve \ln (1+j)) \nonumber \\
& \leq CN^{-1}+ CM^{-1} (1+\ve  \ln M). \label{error-v}
\end{align}

Finally, we consider  the error due to the weakly singular component $ \psi _1$.  Its truncation error is denoted by ${\cal \widetilde T}_{i,j}:= L^{N,M} ( \psi_1 -\Psi)(x_i,t_j)$. The argument splits into the two cases of $\ve \leq CM^{-1}$ and $\ve \geq CM^{-1}$.
In the first case, where $\ve \leq CM^{-1}$, using the bound (\ref{bound-C}) on the first  time derivative of $ \psi _1$   at $t=t_1$, we have
\begin{align}
 \vert  {\cal \widetilde T}_{i,1} \vert &\leq  C\frac{  N^{-1} }{k} + C   a(t_1) + C\frac{1}{k} \int _{r=0} ^{k} \left(1 +\sqrt {\frac{\ve}{r}}\right) \ dr \nonumber \\
&\leq  C\frac{  N^{-1}  }{k} + C. \label{error-TE-Psi0-1}
\end{align}
For $t_j>t_1,$
we first sharpen our bounds on the function $\psi _1$. Observe that
\begin{align*}
& \vert \psi _0(x,t) \vert \leq C E(x,t), \qquad \hbox{if} \quad x \leq d(t), \\
& \vert \psi _1(x,t) -2(d(t)-x)) \vert = \left \vert (d(t)-x)( \psi _0 -2) -2\frac{\sqrt{\ve t}}{\sqrt{\pi}} E    \right \vert
\\
 & \hspace{1.5cm} \leq CE(x,t), \qquad \hbox{if} \quad x \geq d(t).
\end{align*}
Hence, instead of (\ref{bound-C}), (\ref{bound-E}) on the first derivatives of $\psi _1$, we have the following derivative bounds:
\begin{align*}
\left \vert \frac{\partial  }{\partial t  }  \psi _1(x,t)\right \vert &\leq   C  \left(1 +\sqrt {\frac{\ve}{ t}}\right)  E (x,t), \ x \leq d(t), \\
\left \vert \frac{\partial  }{\partial t  }  ( \psi _1(x,t) -2(d(t)-x))\right \vert &\leq   C  \left(1 +\sqrt {\frac{\ve}{ t}}\right)  E (x,t), \ x \geq d(t), \\
 \left \vert \frac{\partial  }{\partial x  } \psi _1(x,t) \right \vert    &\leq  C E_\gamma (x,t), \  x \leq d(t), \\
\left \vert \frac{\partial  }{\partial x  }( \psi _1(x,t) -2(d(t)-x))\right \vert    &\leq C E_\gamma (x,t), \  x \geq d(t).
\end{align*}
When estimating the truncation error due to the presence of $\psi _1$, in the case of $\ve \leq CM^{-1}$, we have at each time level
\begin{align*}
(L-L^N)\psi _1 &= (L-L^N)(\psi _1 -2(d(t)-x))  - \frac{2}{k} \int _{s=t_{j-1}}^{t_j} (a(s)-a(t)) \, ds \\
&=  (L-L^N)(\psi _1 -2(d(t)-x)) +CM^{-1}.
\end{align*}
Depending on where $x_i$ is located relative to $d(t_j)$ we can bound the truncation error
using the first space derivative for the term corresponding to the convective term $  (\psi_1)_x$, to get
 \begin{align}
& \vert {\cal \widetilde T}_{i,j} \vert \leq C\frac{  N^{-1} }{t_j} + C \max _{x\in (x_{i-1},x_i)} e^{-\frac{(x-d(t_j))^2}{4\ve t_j}}+ C\max _{t\in (t_{j-1},t_j)} e^{-\frac{( x_i-d(t))^2}{4 \ve t}} +CM^{-1} \nonumber\\
&\quad \leq C\frac{ N^{-1}}{t_j} + C \max _{x\in (x_{i-1},x_i)} e^{-\frac{CM(x-d(t_j))^2}{T}}+ C\max _{t\in (t_{j-1},t_j)} e^{-\frac{CM( x_i-d(t))^2}{T}} +CM^{-1}. \label{error-TE-Psi0}
\end{align}
Let $x_i$ be fixed. Over the interval $(x_{i-1},x_i)$,
\[
\max _{x\in (x_{i-1},x_i)} e^{-\frac{CM(x-d(t_j))^2}{T}} = \begin{cases} e^{-\frac{CM( x_{i-1}-d(t_j))^2}{T}}, & \hbox{ if }  d(t_j) < x_{i-1},  \\
1, & \hbox{ if } d(t_j) \in [x_{i-1},x_i], \\
e^{-\frac{CM( x_{i}-d(t_j))^2}{T}}, & \hbox{ if } d(t_j)  > x_i.
\end{cases}
\]
Let $d(t_n)$ be the first time $t_n$ for which $x_{i-1} < d(t_n)$. If $d(t_n) < x_i$, then for some $m >n$
\[
d(t_m) =d(t_n) + \int _{r=t_n}^{t_m} a(r) \ dr > x_{i-1}+ \alpha (m-n) k,
\]
where we have used that $d'(t) =a(t) \geq \alpha >0$.
As $M=O(N)$, there will  be, at most, a finite number (independent  of $N$) time points for which $d(t_j) \in [x_{i-1},x_i]$.
Noting $\int _{r=-\infty}^\infty \frac{1}{\sqrt{k}}e^{-\frac{r^2}{k}} \ dr = \sqrt{\pi }$ we have, for each fixed $x_i$,
 \begin{align} \label{error-TE-Psi0-sum}
\sum_{j=1}^M e^{-\frac{CM(x_i-d(t_j))^2}{T}}  =\frac{1}{\sqrt{k}} \sum_{j=1}^M \frac{k}{\sqrt{k}}e^{-\frac{C(x_{i}-d(t_j))^2}{k}}  \le C M^{1/2}.
\end{align}
From~\eqref{error-TE-Psi0-1}, \eqref{error-TE-Psi0} and~\eqref{error-TE-Psi0-sum}, one has for $\ve \leq CM^{-1}$,
\begin{equation} \label{error-psi0}
\vert (\psi_1-\Psi)(x_i,t_j)\vert \le  \sum _{n=1}^j k \vert {\cal \widetilde T}_{i,n} \vert  \leq C  N^{-1} \ln (1+j)+ CM^{-1/2}.
\end{equation}

In the second case, where $\ve \geq CM^{-1}$,  at the first time level we have from (\ref{bound-C})
\begin{align*}
 \vert {\cal \widetilde T}_{i,1} \vert &\leq C\frac{  N^{-1} }{k}+ C\frac{N^{-1}}{\sqrt{\ve k}} + C\frac{1}{k} \int _{r=0} ^{k} \left(1 +\sqrt {\frac{\ve}{r}}\right) \ dr \\
&\leq  C\frac{  N^{-1} }{k} + C  \sqrt{\ve} M^{1/2};
\end{align*}
and at all the other time levels, from \eqref{bound-D} and~\eqref{bound-E}, we have for $j>1$
\begin{align*}
 \vert {\cal \widetilde T}_{i,j} \vert &\leq C\frac{ N^{-1} }{t_j} + C \frac{N^{-1}}{\sqrt{\vr t_j} }+ C \int _{r=t_{j-1}} ^{t_j} \frac{1}{r} \ dr \\
&  + C \sqrt{\ve} \int _{r=t_{j-1}} ^{t_j} \sqrt {\frac{1}{r^3}} \ dr +C  \frac1{\sqrt{\vr}} \int _{r=t_{j-1}} ^{t_j} \frac{1}{ \sqrt{r}} \ dr\\
& \le  C \frac{N^{-1}}{t_j} + C  \int _{s={j-1}} ^{j} \frac{1}{ s} \ ds \\
&+ CM^{1/2}\sqrt{\ve} \int _{s={j-1}} ^{j}  {\frac1{\sqrt{s^3}} \ ds
+C \frac{M^{-1/2}}{\sqrt{\vr}} \int _{s={j-1}} ^{j} \frac{1}{ \sqrt{s} }\ ds}.
\end{align*}
Applying the earlier argument, we have for $\ve \geq CM^{-1}$,
\begin{align}
&  \vert (\psi_1-\Psi)(x_i,t_j)\vert \le  \sum _{n=1}^j k \vert {\cal \widetilde T}_{i,n} \vert \leq CN^{-1}+ C \sqrt{\vr} M^{-1/2} +  \sum _{n=2}^j k \vert {\cal \widetilde T}_{i,n} \vert  \nonumber \\
& \quad \leq CM^{-1/2}+ C (N^{-1}+M^{-1}) \int_{s=1}^j \frac{ds}{s}+{C M^{-1/2}\sqrt{\ve} \int_{s=1}^j \frac{ds}{\sqrt{s^3}}} \nonumber \\
& \quad + C M^{-1} \int_{s=1}^j \frac{ds}{\sqrt{s}} \nonumber \\
& \quad \leq C (N^{-1}+M^{-1}) \ln (1+j) + CM^{-1/2}. \label{error-psi}
\end{align}
Hence,  if $M=O(N)$,  from~\eqref{error-w}, \eqref{error-v}, \eqref{error-psi0} and~\eqref{error-psi}, we have the nodal error estimate
\begin{eqnarray*}
  \vert (Y-y)(x_i,t_j) \vert \leq C  N^{-1} \ln N + C\vert [\phi ' ](d) \vert  M^{-1/2}.
\end{eqnarray*}

Combine the arguments in \cite[Theorem 3.12]{fhmos} with the interpolation bounds in \cite[Lemma 4.1]{styor4} and the bounds on the derivatives of the components $ v,  w, \psi _1 $. Note that from \cite[Lemma 4.1]{styor4}, we only require the first time derivative of any component of  $ y$ to be uniformly bounded. For the weakly singular component $ \psi _1 $, the argument is split into the two cases of $\ve \leq CM^{-1}$ and $\ve \geq CM^{-1}$.
\end{proof}

\begin{remark} \label{rem:improvement}
The error estimates of Theorem~\ref{th_a(t)} reveal that the method~\eqref{discr-probl} converges with order $M^{-1/2}$ when $[\phi'(d)]\ne 0$. In order to increase the rate of convergence, the analytical/numerical method can be used to approximate the component $y$. Thus, from the expansion~\eqref{expansion}, one can consider the decomposition
\begin{align*}
u(x,t)&=0.5 [\phi](d) \psi_0(x,t)+y(x,t) \\
& =0.5 [\phi](d) \psi_0(x,t)-0.5[\phi'(d)] \psi_1(x,t) +y_1(x,t).
\end{align*}
In   Example~\ref{ex2} in \S 4, we observe an improvement in the orders of convergence when $y_1$ is approximated with the numerical scheme~\eqref{discr-probl} instead of $y$.
\end{remark}

\subsection{Interior and boundary layers interact with each other} \label{sec:LayerInteract}

In the case where (\ref{assump-temp}) is not assumed,
the bounds (\ref{bnds-vR}) on $ v_R$ are still applicable, but we need to determine alternative  bounds to  (\ref{bnds-w}), on the boundary layer function $w$. The boundary layer function  is the solution of the problem
\begin{align*}
 L  w &=0, (x,t) \in  Q; \qquad   w(x,0) =0; \quad 0 \leq x \leq 1\\
  w(0,t) &=0; \quad t >0,  \\
   w(1,t &)=
 0.5\left( [\phi ](d) - \sum _{i=0}^4 [\phi ^{(i)}](d) \frac{(-1)^i}{i!}   \psi _i (1,t)\right)  -    v_R(1,t), \quad t >0.
\end{align*}
Since
\begin{equation} \label{psi0t}
\frac{\partial   \psi _0}{\partial t} (1,t)=  \frac{1}{\sqrt{\ve \pi t}}  \Bigl(\frac{(d(t)-1)-2ta}{2 t} \Bigr) e^{-\frac{(1-d(t))^2}{4\ve t}}
\end{equation}
 when (\ref{assump-temp}) is not satisfied, then there exists  a $T_1$ (independent of $\ve $), with $0<T_1 <T$ such that
\[
1 \geq \frac{1-d(t)}{1-d} \geq \delta >0, \quad \hbox{for} \quad t \leq T_1.
\]
Then, for $t \leq T_1$
\[
\left \vert \frac{\partial   \psi _0}{\partial t} (1,t) \right \vert \leq   \frac{C}{\sqrt{\ve  t}} e^{-\frac{\delta^2 (1-d)^2}{4\ve t}} \leq   \frac{C}{\delta (1-d)} \leq C
\]
and for $t > T_1$
\[
\left \vert \frac{\partial   \psi _0}{\partial t} (1,t) \right \vert \leq   \frac{C}{\sqrt{\ve  T^3_1}} { e^{-\frac{(1-d(t))^2}{4\ve t}}}  \leq   \frac{C}{\sqrt{\ve }}  e^{-\frac{(1-d(t))^2}{4\ve t}} .
\]
In the same way, we can establish  that
\[
\Bigl \vert \frac{\partial ^{j}   w}{ \partial t ^j } (1,t)\Bigr \vert  \leq  C \left(1+\ve ^{-j/2}  e^{-\frac{(1-d(t))^2}{4\ve t}}\right), \quad j=1,2.
\]
Based on the argument in \cite[Theorem 1]{dervs-parabolic-conv-diff}  one can deduce the following bounds
\begin{eqnarray*}
\Bigl \vert \frac{\partial ^{i+j}  w}{\partial x ^i \partial t ^j } (x,t)\Bigr \vert  \leq  C\ve ^{-i}(1+\ve ^{-j/2}) e^{-\alpha(1-x)/\ve}, \quad 0 \leq i+2j \leq  4.
\end{eqnarray*}
In the coarse mesh $x_i \leq 1-\sigma$,
\[
\vert   w (x_i,t_j)  \vert \leq CN^{-1}, \quad \vert W (x_i,t_j) \vert \leq CN^{-1},
\]
and the truncation error within the fine mesh region $ (1-\sigma ,1) \times (0,T]$  is of the form
\[
\vert L ^{N,M}(  w -  W)(x_i,t_j) \vert \leq C\frac{N^{-1} \ln N +M^{-1}}{\ve}.
\]
Use a discrete barrier function \cite[Theorem 2]{dervs-parabolic-conv-diff}  to deduce that
\[
\vert  ( w -  W)(x_i,t_j) \vert \leq C(N^{-1} \ln N +M^{-1}).
\]
Hence, the nodal error bound in Theorem \ref{th_a(t)} still applies
in the case where (\ref{assump-temp}) is not assumed. To extend this nodal error bound to a global error bound,   we first observe that (if (\ref{assump-temp}) is violated),
 then there exists a $T_* <T$ such that
\[
 d(T_*)=d+\int _{s=0}^{T_*} a(s) \ ds =1 \]
 and $T_*=O(1)$ as $T_* \geq (1-d)/\Vert a \Vert$.
 From~\eqref{psi0t}, note also that, for $t=O(1)$
\[
\frac{\partial  \psi _0}{\partial t} (1,t)=  \frac{C}{\sqrt{\ve }} e^{-\frac{(1-d(t))^2}{4\ve t}},
\]
and from (\ref{right-bc})  one has for $j=1,2$
\[
\Bigl \vert \frac{\partial ^j }{\partial t ^j }   y(1,t)\Bigr \vert \leq C \ve ^{-j/2},\quad \hbox{when} \quad  \vert t-T_* \vert \le C\sqrt{\ve \ln (1/\ve)}.
\]
To interpolate this  layer function along the boundary $x=1$, we need to introduce a Shishkin mesh in time, which places $M/2$ mesh points into the time interval
\begin{equation}\label{time-Sh-mesh}
[ T_* - \tau, T_* + \tau ], \text{ with }  \tau :=\min
\left \{\frac{T_*}{2}, \frac{T-T_*}{2},2\frac{\sqrt{T_* \vr\ln (M)} } {\alpha} \right \}.
\end{equation}
Subdivide each of $[0,T_*-\tau]$ and $[T_*+\tau,T]$ by an equidistant mesh with $M/4$ subintervals.

With this modification to the numerical method, the error bound in Theorem \ref{th_a(t)} applies, as the linear interpolant of $ y(1,t)$  with $t\in(t_{j-1},t_j)$ satisfies for $ t \not \in \ [ T_* - \tau, T_* + \tau ]$ and $\tau = O(\sqrt{\ve \ln M})$
\begin{align*}
 \vert  y(1,t) -  y_I(1,t) \vert &\leq  C  \vert  \psi _0(1,t) \vert \leq C e^{-\frac{(d(T^*)-d(t))^2}{4\ve t}}\\
&\leq  C e^{-\frac{(\int _t^{T^*} a(s) ds)^2}{4\ve t}} \leq Ce^{-\frac{(\alpha (T^*-t) )^2}{4\ve t}} \\
&\leq  Ce^{-\frac{(\alpha \tau )^2}{4\ve T_*}}
\leq CM^{-1}
\end{align*}
and for $ t \in \ [ T_* - \tau, T_* + \tau ]$
\begin{eqnarray*}
 \vert  y(1,t) -  y_I(1,t) \vert &\leq &
C (t_j-t_{j-1}) \Bigl \Vert \frac{\partial  }{\partial t }  \psi _0(1,t) \Bigr\Vert_{[t_{j-1},t_j]}
 \leq
CM^{-1}\sqrt{\ln (M) }.
\end{eqnarray*}

\section{Numerical results}

In this section we present   the numerical results for five test examples whose solutions are unknown. The global orders of convergence are estimated using the two-mesh method~\cite[Chapter 8]{fhmos}. In this section the computed solutions with~\eqref{discr-probl} on the Shishkin meshes $\bar Q^{N,M}$ and $\bar Q^{2N,2M}$ will be denoted, respectively,  by $Y^{N,M}$ and $Y^{2N,2M}$.
   Let $\bar Y^{N,M}$  be the bilinear interpolation of the discrete solution $Y^{N,M}$  on the mesh $\bar Q^{N,M} $. Then, compute the maximum two-mesh global differences
$$
D^{N,M}_\ve:= \Vert \bar Y^{N,M}-\bar Y^{2N,2M}\Vert _{\bar Q^{N,M} \cup \bar Q^{2N,2M}}
$$
and use these values to estimate the orders of global  convergence $ P^{N,M}_\ve$
$$
 P^{N,M}_\ve:=  \log_2\left (\frac{D^{N,M}_\ve}{D^{2N,2M}_\ve} \right).
$$
The uniform  two-mesh global differences $D^{N,M}$ and the uniform orders of global convergence $ P^{N,M}$ are calculated by
$$
D^{N,M}:= \max_{\ve \in S} D^{N,M}_\ve, \quad  P^{N,M}:=  \log_2\left ( \frac{D^{N,M}}{D^{2N,2M}} \right),
$$
where $S=\{2^0,2^{-1},\ldots,2^{-26}\}$. For each of the five test examples, plots of $\bar Y^{N,M}$ and $\bar U^{N,M}:=\bar Y^{N,M}+ \bar S$ (see (\ref{def-S}))
are given for the sample values of $\ve = 2^{-12}$ and $N=M=64.$

In \S \ref{sec:numer-theory} the numerical results for three representative examples are given and they indicate that the error bounds established in Theorem~\ref{th_a(t)} are sharp. In the first two examples the interior layer does not interact with the boundary layer but in the third example they do interact. The  two examples considered in \S \ref{sec:numer-extension} are not covered by the theory developed in earlier sections.

\subsection{Test problems covered by the theory in Theorem~\ref{th_a(t)}} \label{sec:numer-theory}
\begin{example} \label{ex1}
 Consider the following  test problem
\[
\begin{array}{l}
-\ve  u_{xx} + (1+t^2) u_x + u_t= 4x(1-x)t+t^2, \quad (x,t) \in (0,1)\times (0,0.5], \\
 u(x,0)=-2, 0 \leq x < 0.3, \quad    u(x,0)=1, \ 0.3 \leq x \leq 1,         \\
 u(0,t)=-2, \quad    u(1,t) =1, \ 0 < t \leq 0.5.
\end{array}
\]
\end{example}

Note  that in this example $\ [ \phi  ](0.3) = 3,$ $[\phi ' ](0.3) = 0$  and the characteristic curve $\Gamma^*$ is $d(t)=t+t^3/3+0.3$. The computed approximation $Y$ with the scheme~\eqref{discr-probl} and the numerical solution $U$ are displayed in Figure~\ref{fig:ex1-solutions}, where we can observe that the interior and boundary layer do not merge.

 In the last row of Table~\ref{tb:ex1} and all subsequent  tables in this paper, the uniform two-mesh global differences and their orders of convergence are provided.
The results displayed in Table~\ref{tb:ex1} agree with the
 theoretical error estimates established in Theorem~\ref{th_a(t)}.

 \begin{figure}[h!]
\centering
\resizebox{\linewidth}{!}{
	\begin{subfigure}[Approximation to $y$]{
		\includegraphics[scale=0.5, angle=0]{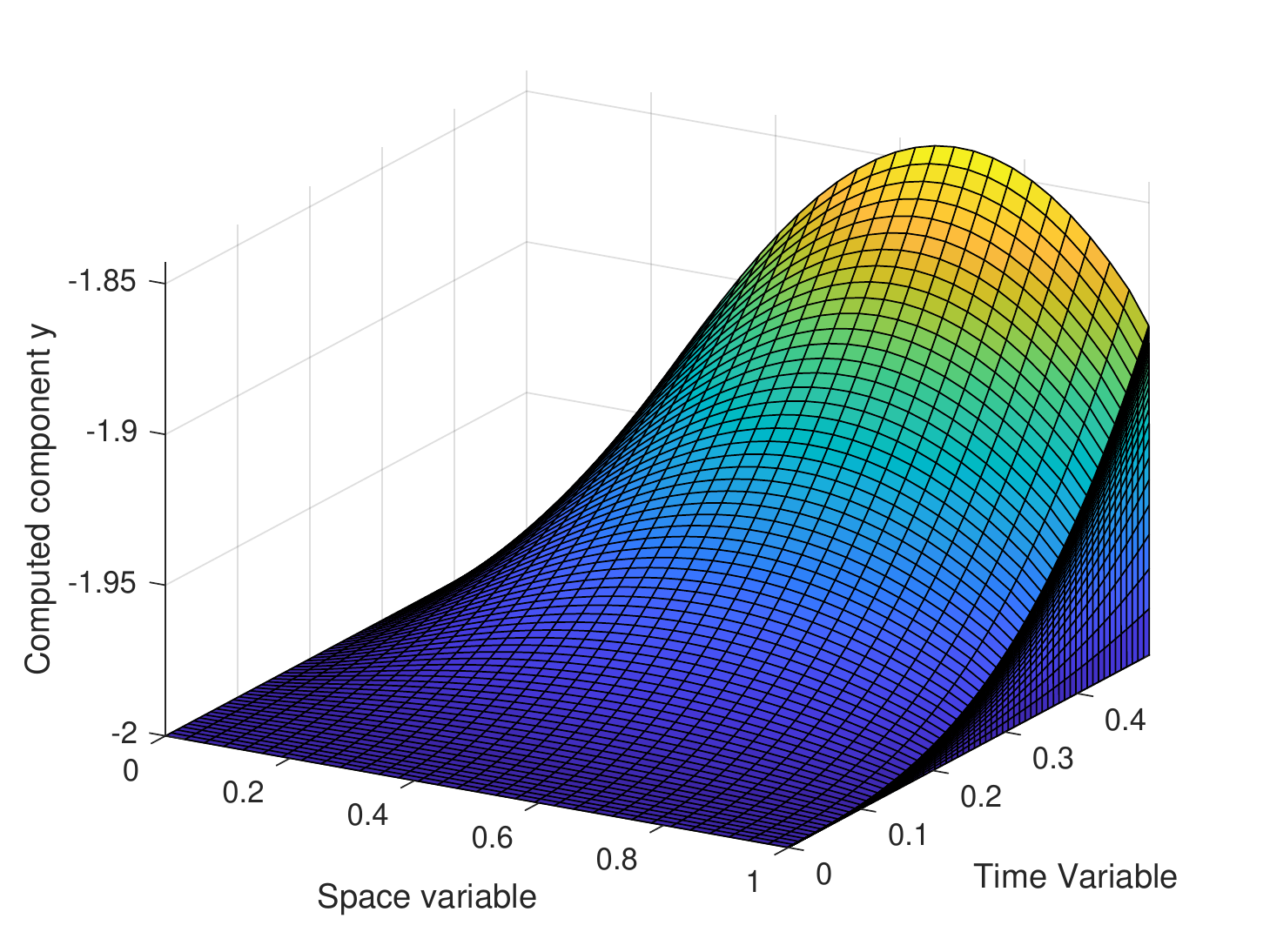}
		}
    \end{subfigure}
\begin{subfigure}[Approximation to $u$]{
		\includegraphics[scale=0.5, angle=0]{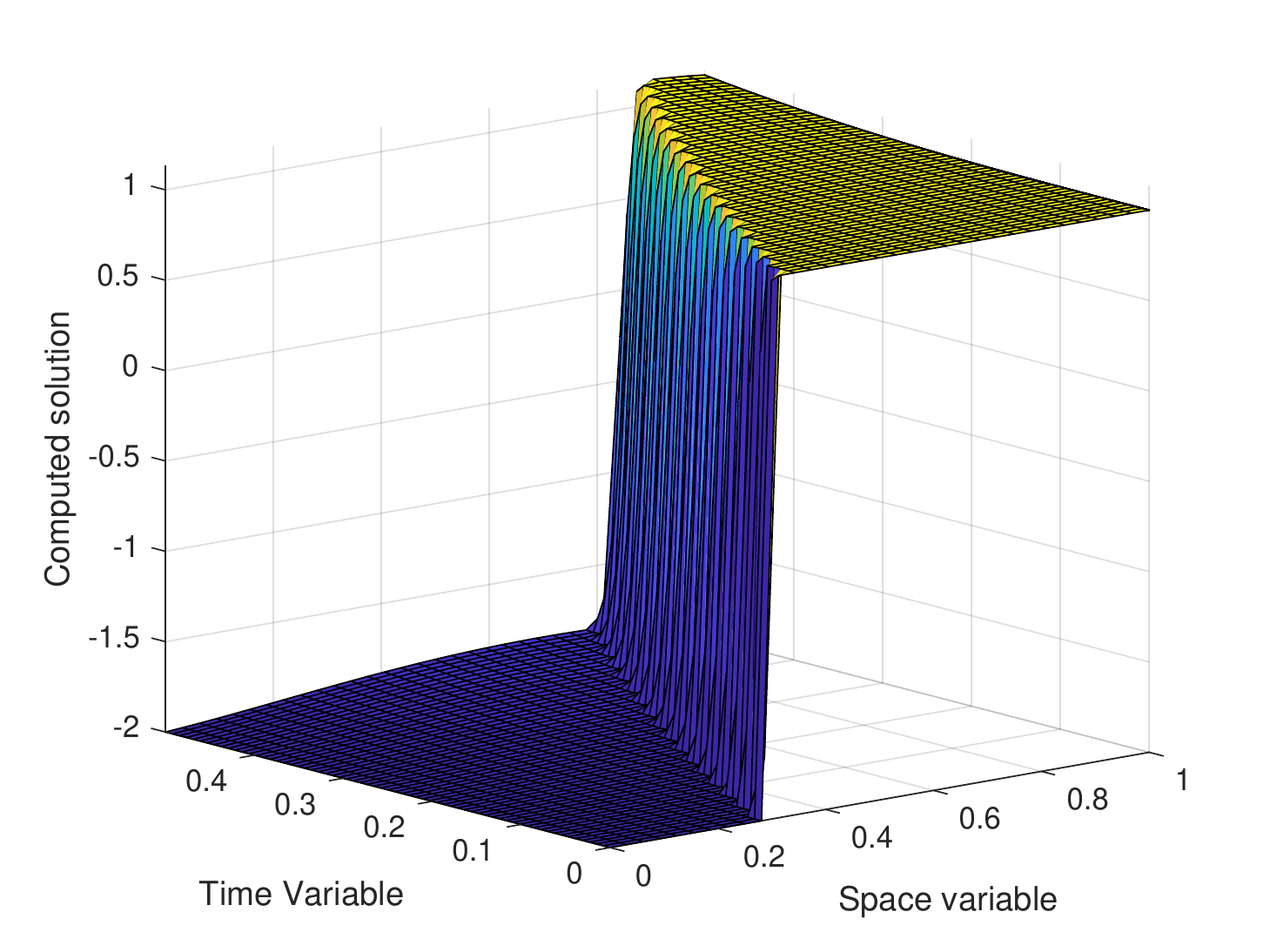}
		}
	\end{subfigure}
}
	\caption{Example~\ref{ex1}: Numerical approximations to $y$ and $u$ with $\vr=2^{-12}$ and $N=M=64$}
	\label{fig:ex1-solutions}
 \end{figure}

\begin{table}[h]
\caption{Example~\ref{ex1}: Maximum two-mesh global differences and orders of convergence for the function $y$}
\begin{center}{\tiny \label{tb:ex1}
\begin{tabular}{|c||c|c|c|c|c|c|c|}
 \hline  & N=M=32 & N=M=64 & N=M=128 & N=M=256 & N=M=512 & N=M=1024 & N=M=2048 \\
\hline \hline $\vr=2^{0}$
&{\bf 3.495E-02} &9.470E-03 &3.704E-03 &1.656E-03 &8.105E-04 &4.011E-04 &1.995E-04 \\
&1.884&1.354&1.161&1.031&1.015&1.007&
\\ \hline $\vr=2^{-2}$
&1.230E-02 &5.783E-03 &2.798E-03 &1.376E-03 &6.821E-04 &3.396E-04 &1.695E-04 \\
&1.088&1.048&1.024&1.012&1.006&1.003&
\\ \hline $\vr=2^{-3}$
&1.673E-02 &{\bf 1.079E-02} &{\bf 5.191E-03} &2.547E-03 &1.262E-03 &6.280E-04 &3.133E-04 \\
&0.633&1.055&1.027&1.013&1.007&1.003&
\\ \hline $\vr=2^{-4}$
&1.368E-02 &7.991E-03 &4.652E-03 &{\bf 2.642E-03} &{\bf 1.484E-03 }&{\bf 8.235E-04} &{\bf 4.726E-04} \\
&0.776&0.781&0.816&0.832&0.850&0.801&
\\ \hline $\vr=2^{-6}$
&4.800E-03 &2.698E-03 &1.557E-03 &8.862E-04 &4.976E-04 &2.763E-04 &1.516E-04 \\
&0.831&0.792&0.813&0.833&0.848&0.866&
\\ \hline $\vr=2^{-8}$
&2.468E-03 &1.323E-03 &7.255E-04 &3.993E-04 &2.204E-04 &1.221E-04 &6.748E-05 \\
&0.900&0.866&0.862&0.857&0.851&0.856&
\\ \hline $\vr=2^{-10}$
&2.875E-03 &1.679E-03 &9.370E-04 &5.184E-04 &2.860E-04 &1.572E-04 &8.599E-05 \\
&0.776&0.841&0.854&0.858&0.864&0.870&
\\ \hline $\vr=2^{-12}$
&2.968E-03 &1.707E-03 &9.543E-04 &5.288E-04 &2.927E-04 &1.612E-04 &8.819E-05 \\
&0.798&0.839&0.852&0.853&0.861&0.870&
\\ \hline $\vr=2^{-14}$
&2.992E-03 &1.714E-03 &9.586E-04 &5.313E-04 &2.943E-04 &1.623E-04 &8.892E-05 \\
&0.804&0.839&0.851&0.852&0.859&0.868&
\\
\hline &\vdots &\vdots &\vdots &\vdots &\vdots &\vdots &\vdots \\
&&&&&&&
\\ \hline $\vr=2^{-26}$
&3.001E-03 &1.717E-03 &9.600E-04 &5.322E-04 &2.949E-04 &1.626E-04 &8.913E-05 \\
&0.806&0.839&0.851&0.852&0.859&0.867&
\\ \hline $D^{N,M}$
&3.495E-02 &1.079E-02 &5.191E-03 &2.642E-03 &1.484E-03 &8.235E-04 &4.726E-04 \\
$P^{N,M}$ &1.696&1.055&0.974&0.832&0.850&0.801&\\ \hline \hline
\end{tabular}}
\end{center}
\end{table}

\begin{example} \label{ex2}
 Consider the following  test problem
\[
\begin{array}{l}
-\ve  u_{xx} + (1+t^2) u_x +  u+ u_t= 4x(1-x)t+t^2, \quad (x,t) \in (0,1)\times (0,0.5], \\
 u(x,0)=-x^3, \ 0 \leq x < 0.3, \quad    u(x,0)=(1-x)^3, \ 0.3 \leq x \leq 1,         \\
 u(0,t)=   u(1,t) =0, \ 0 < t \leq 0.5.
\end{array}
\]
\end{example}
The computed maximum two-mesh global differences and orders of convergence are given in Table~\ref{tb:ex2}. Unlike Example~\ref{ex1}, the initial condition satisfies $[ \phi ' ](0.3)\ne 0$ and its influence on the orders of convergence is clearly shown in this table. The orders of convergence are reduced to a 0.5, which is in agreement with the error bound given in Theorem~\ref{th_a(t)}.

 For this particular example, we show that more accurate approximations to the solution can be obtained if the decomposition given in Remark~\ref{rem:improvement} is considered. The numerical approximation to  $y_1$ and $u$ are displayed in Figure~\ref{fig:ex2-solution+2md-y1}.
 The uniform two-mesh global differences and their orders of convergence are given in Table~\ref{tb:ex2-y1}, showing that the numerical/analytical scheme converges globally and uniformly with almost first order when the component $y_1$ is approximated.

 \begin{figure}[h!]
\centering
\resizebox{\linewidth}{!}{
	\begin{subfigure}[Approximation to $y$]{
		\includegraphics[scale=0.5, angle=0]{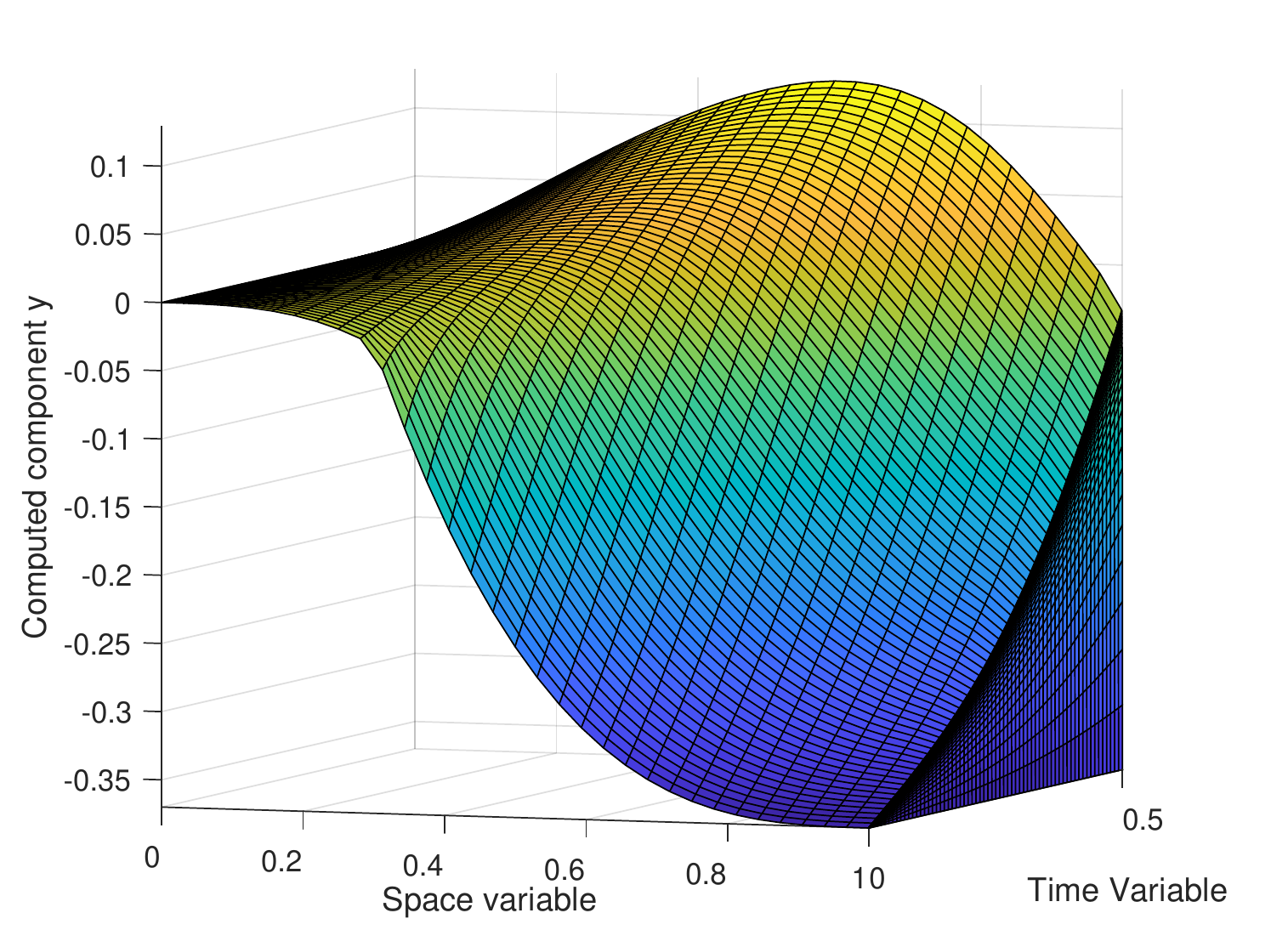}
		}
    \end{subfigure}
\begin{subfigure}[Approximation to $u$]{
		\includegraphics[scale=0.5, angle=0]{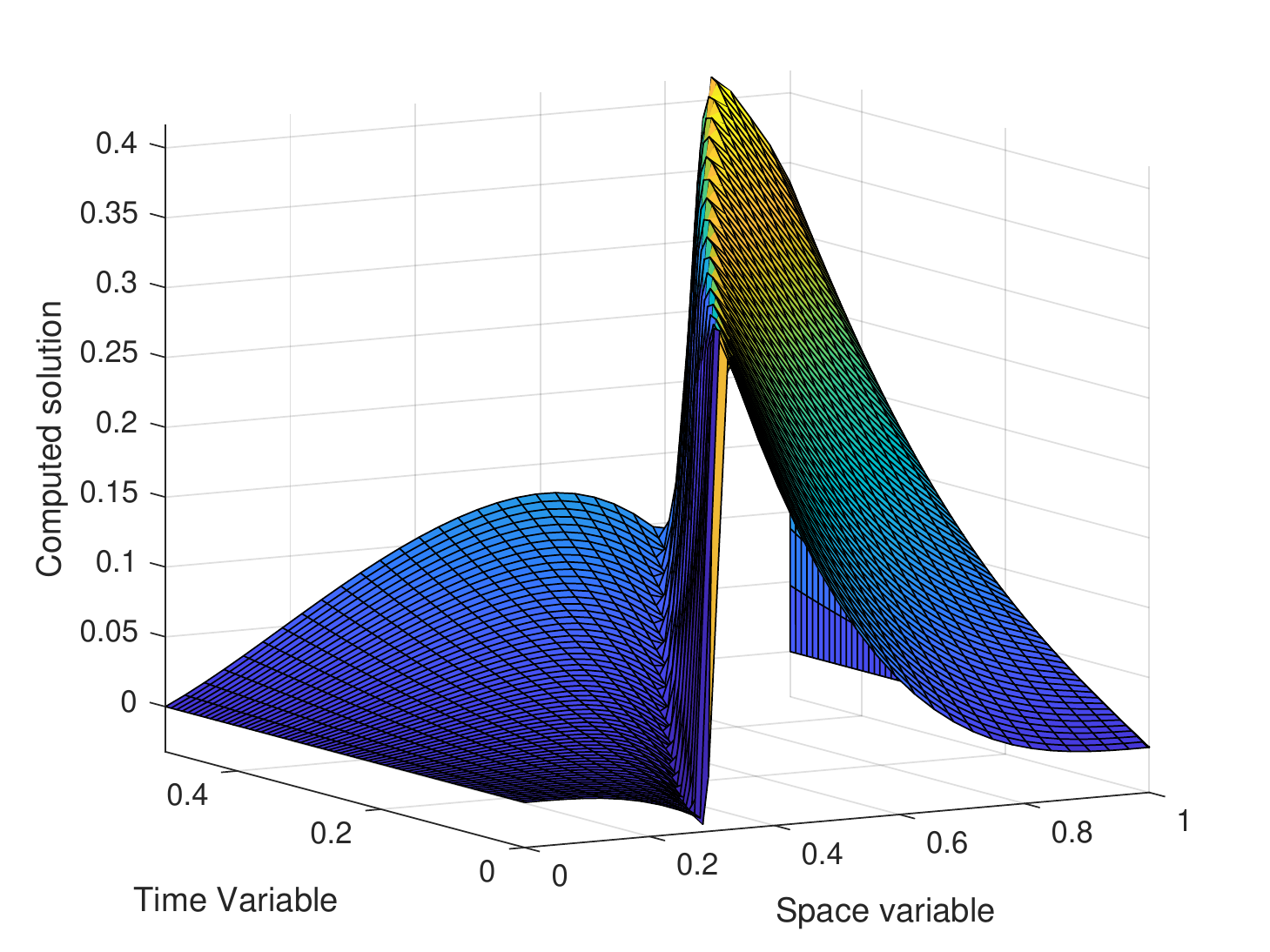}
		}
	\end{subfigure}
}
	\caption{Example~\ref{ex2}: Numerical approximations to $y$ and $u$ with $\vr=2^{-12}$ and $N=M=64$}
	\label{fig:ex2-solutions}
 \end{figure}

\begin{table}[h]
\caption{Example~\ref{ex2}: Maximum two-mesh global differences and orders of convergence for the function $y$}
\begin{center}{\tiny \label{tb:ex2}
\begin{tabular}{|c||c|c|c|c|c|c|c|}
 \hline  & N=M=32 & N=M=64 & N=M=128 & N=M=256 & N=M=512 & N=M=1024 & N=M=2048 \\
\hline \hline $\vr=2^{0}$
&1.071E-02 &1.000E-02 &6.887E-03 &5.201E-03 &3.621E-03 &2.685E-03 &1.883E-03 \\
&0.098&0.539&0.405&0.522&0.431&0.512&
\\ \hline $\vr=2^{-2}$
&3.590E-03 &4.569E-03 &3.007E-03 &2.481E-03 &1.694E-03 &1.314E-03 &9.131E-04 \\
&-0.348&0.603&0.278&0.550&0.367&0.525&
\\ \hline $\vr=2^{-4}$
&7.415E-03 &3.990E-03 &2.255E-03 &1.247E-03 &7.917E-04 &6.614E-04 &4.582E-04 \\
&0.894&0.823&0.854&0.656&0.259&0.530&
\\ \hline $\vr=2^{-6}$
&1.125E-02 &7.051E-03 &4.061E-03 &2.202E-03 &1.154E-03 &5.894E-04 &2.964E-04 \\
&0.674&0.796&0.883&0.933&0.969&0.992&
\\ \hline $\vr=2^{-8}$
&1.425E-02 &9.726E-03 &6.373E-03 &3.921E-03 &2.277E-03 &1.248E-03 &6.590E-04 \\
&0.551&0.610&0.701&0.784&0.867&0.922&
\\ \hline $\vr=2^{-10}$
&1.519E-02 &1.088E-02 &7.564E-03 &5.138E-03 &3.344E-03 &2.063E-03 &1.198E-03 \\
&0.482&0.524&0.558&0.620&0.697&0.784&
\\ \hline $\vr=2^{-12}$
&1.543E-02 &1.121E-02 &7.945E-03 &5.636E-03 &3.911E-03 &2.651E-03 &1.725E-03 \\
&0.461&0.497&0.495&0.527&0.561&0.620&
\\ \hline $\vr=2^{-14}$
&1.550E-02 &1.130E-02 &8.048E-03 &5.777E-03 &4.102E-03 &2.893E-03 &2.007E-03 \\
&0.456&0.490&0.478&0.494&0.504&0.528&
\\
\hline &\vdots &\vdots &\vdots &\vdots &\vdots &\vdots &\vdots \\
&&&&&&&
\\ \hline $\vr=2^{-26}$
 &{\bf 1.552E-02} &{\bf 1.133E-02} &{\bf 8.083E-03 }&{\bf 5.827E-03} &{\bf 4.172E-03} &{\bf 2.989E-03 }&{\bf 2.135E-03} \\
&0.454&0.487&0.472&0.482&0.481&0.485&
\\ \hline $D^{N,M}$
&1.552E-02 &1.133E-02 &8.083E-03 &5.827E-03 &4.172E-03 &2.989E-03 &2.135E-03 \\
$P^{N,M}$ &0.454&0.487&0.472&0.482&0.481&0.485&\\ \hline \hline
\end{tabular}}
\end{center}
\end{table}

 \begin{figure}[h!]
\centering
\resizebox{\linewidth}{!}{
	\begin{subfigure}[Approximation to $y_1$]{
		\includegraphics[scale=0.5, angle=0]{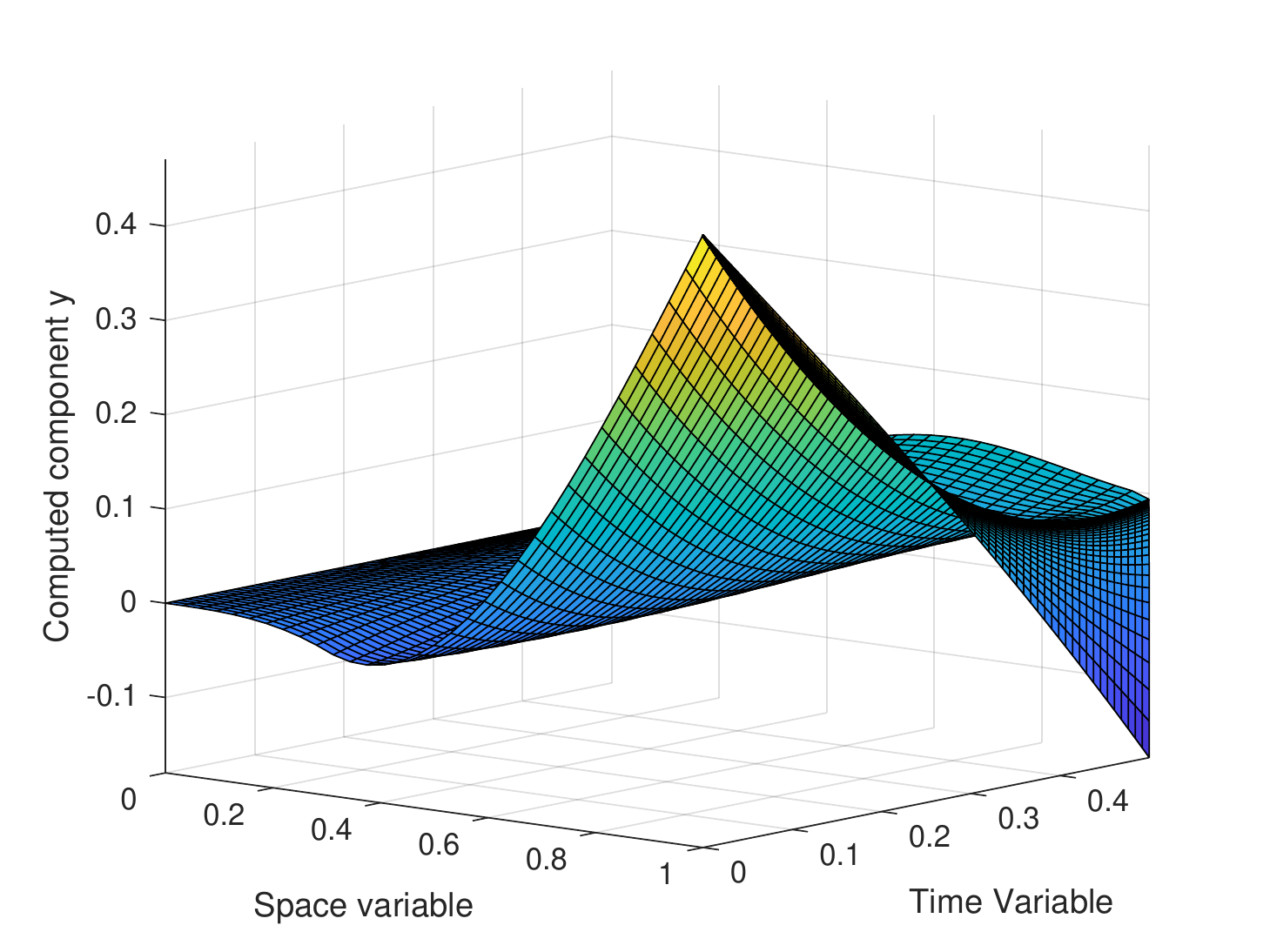}
		}
    \end{subfigure}
\begin{subfigure}[Improved approximation to $u$]{
		\includegraphics[scale=0.5, angle=0]{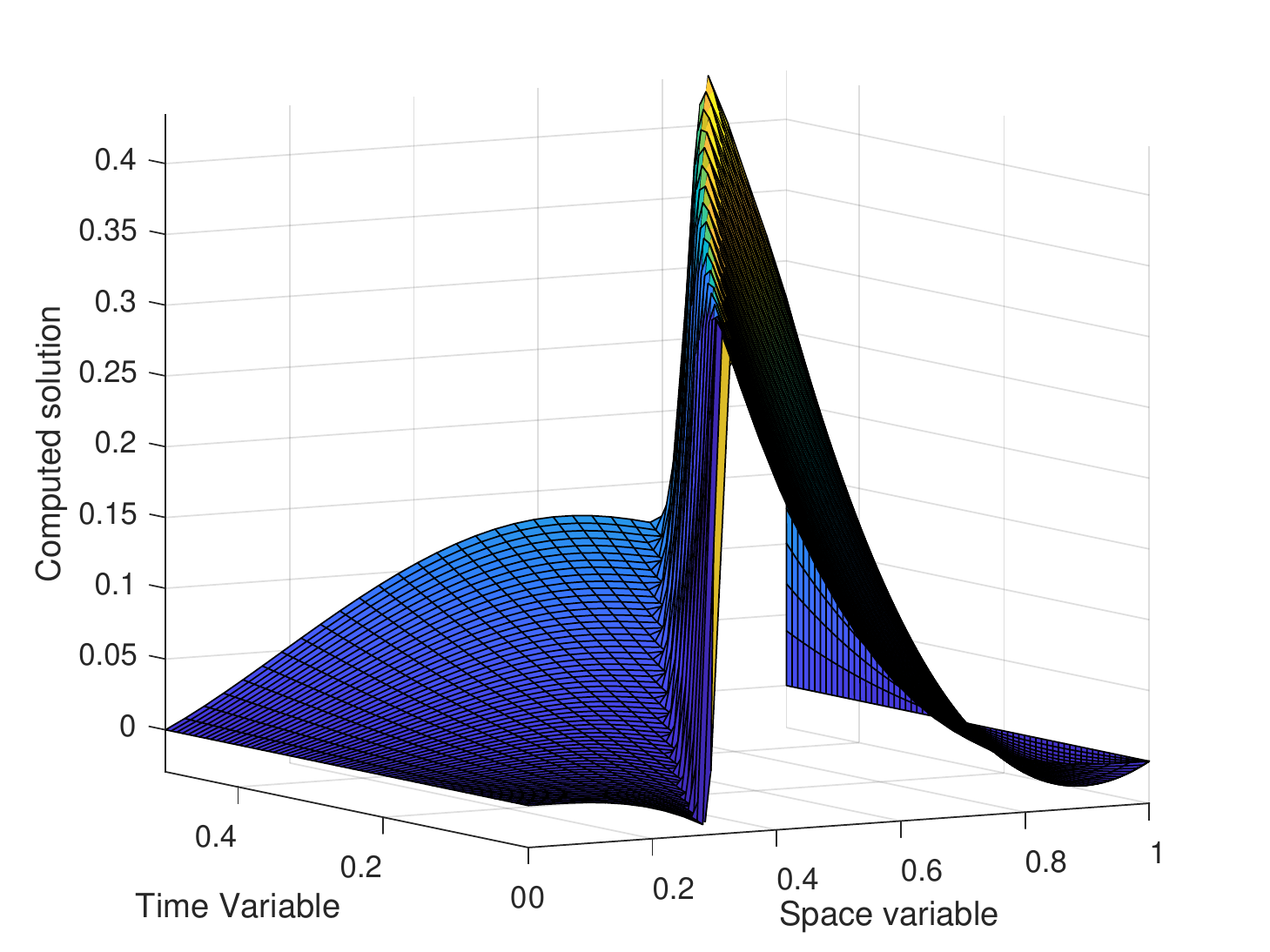}
		}
	\end{subfigure}
}
	\caption{ Example~\ref{ex2}: Numerical approximation to $y_1$ and improved approximation to $u$ for $\vr=2^{-12}$ and $N=M=64$}
	\label{fig:ex2-solution+2md-y1}
 \end{figure}

\begin{table}[h]
\caption{Example~\ref{ex2}: Uniform two-mesh global differences and orders of convergence for the function $y_1$}
\begin{center}{\tiny \label{tb:ex2-y1}
\begin{tabular}{|c||c|c|c|c|c|c|c|}
 \hline  & N=M=32 & N=M=64 & N=M=128 & N=M=256 & N=M=512 & N=M=1024 & N=M=2048
\\ \hline $D^{N,M}$
&1.403E-02 &8.451E-03 &4.856E-03 &2.669E-03 &1.413E-03 &7.361E-04 &3.789E-04 \\
$P^{N,M}$ &0.731&0.799&0.863&0.917&0.941&0.958&\\ \hline \hline
\end{tabular}}
\end{center}
\end{table}

\begin{example} \label{ex3}
 Consider the following test problem
\[
\begin{array}{l}
-\ve  u_{xx} + (1+t) u_x + u_t= 4x(1-x)t+t^2, \quad (x,t) \in (0,1)\times (0,2], \\
 u(x,0)=-2, 0 \leq x < 0.3, \quad    u(x,0)=1, \ 0.3 \leq x \leq 1,         \\
 u(0,t)=-2, \quad    u(1,t) =1, \ 0 < t \leq 2.
\end{array}
\]
\end{example}

The initial condition is discontinuous at $d=0.3$, $[\phi ' ](0.3) = 0$ and the characteristic curve $\Gamma^*$ is now given by $d(t)=t+t^2/2+0.3$. The final time has been chosen large enough so that the interior layer interacts with the boundary layer. Then, we use a piecewise uniform mesh in time (\ref{time-Sh-mesh}) by computing $T^*$  with $d(T^*)=1$. In this example, it is given by
\[
T^*=\sqrt{1+2(1-d)}-1 \approx 0.5492.
\]
In Figure~\ref{fig:ex3-solutions}  a prominent layer near the boundary $x=1$  is observed for $t\ge T^*.$ Error bounds when the layers interact are discussed in \S~\ref{sec:LayerInteract} and it is proved that Theorem~\ref{th_a(t)} also applies.
The maximum two-mesh global differences and the orders of convergence are given in Table~\ref{tb:ex3} and it is observed that it is a uniformly and globally convergent scheme having almost first-order. All these numerical results agree with  the error bound established in Theorem~\ref{th_a(t)}.

 \begin{figure}[h!]
\centering
\resizebox{\linewidth}{!}{
	\begin{subfigure}[Approximation to $y$]{
		\includegraphics[scale=0.5, angle=0]{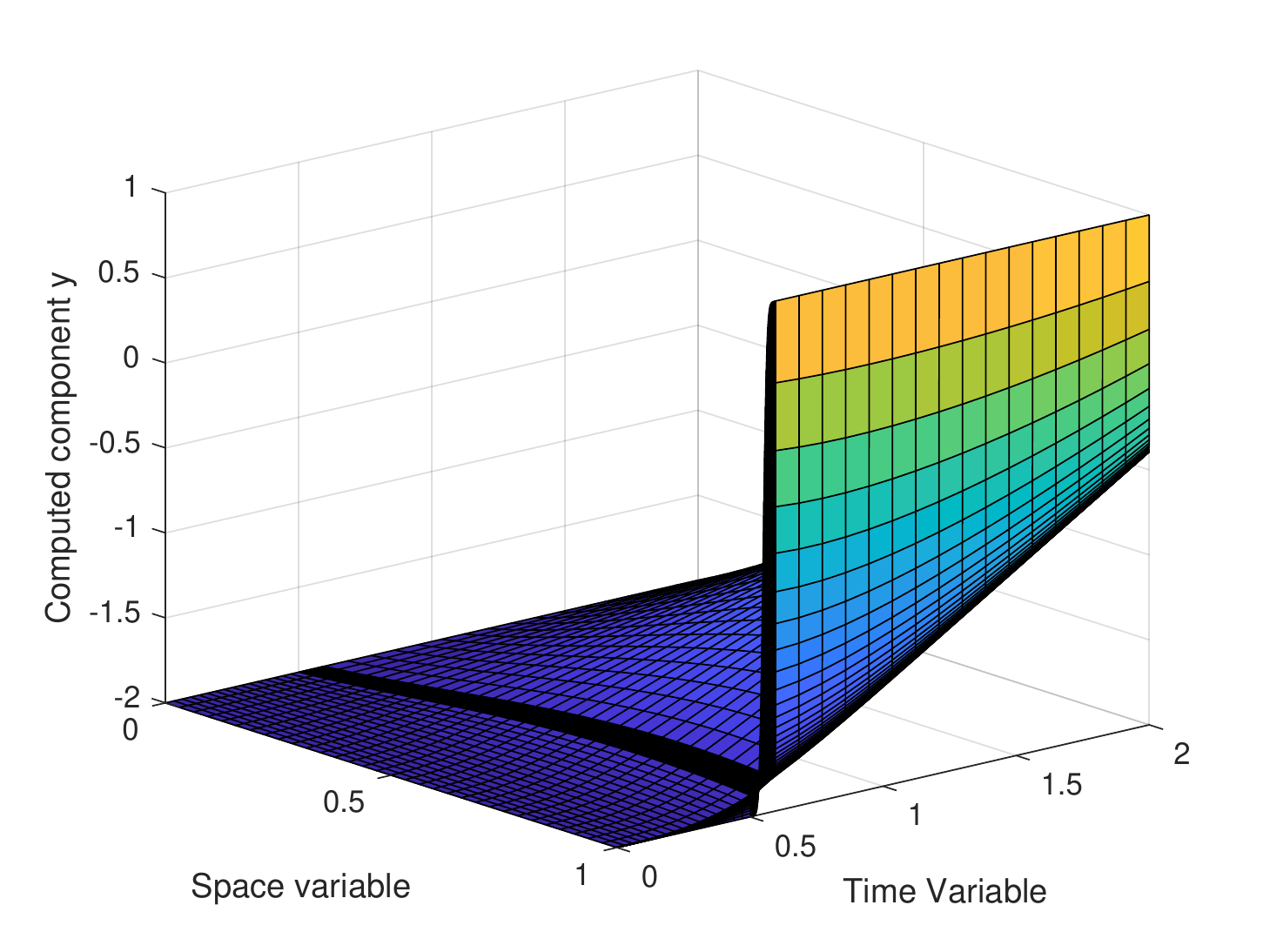}
		}
    \end{subfigure}
\begin{subfigure}[Approximation to $u$]{
		\includegraphics[scale=0.5, angle=0]{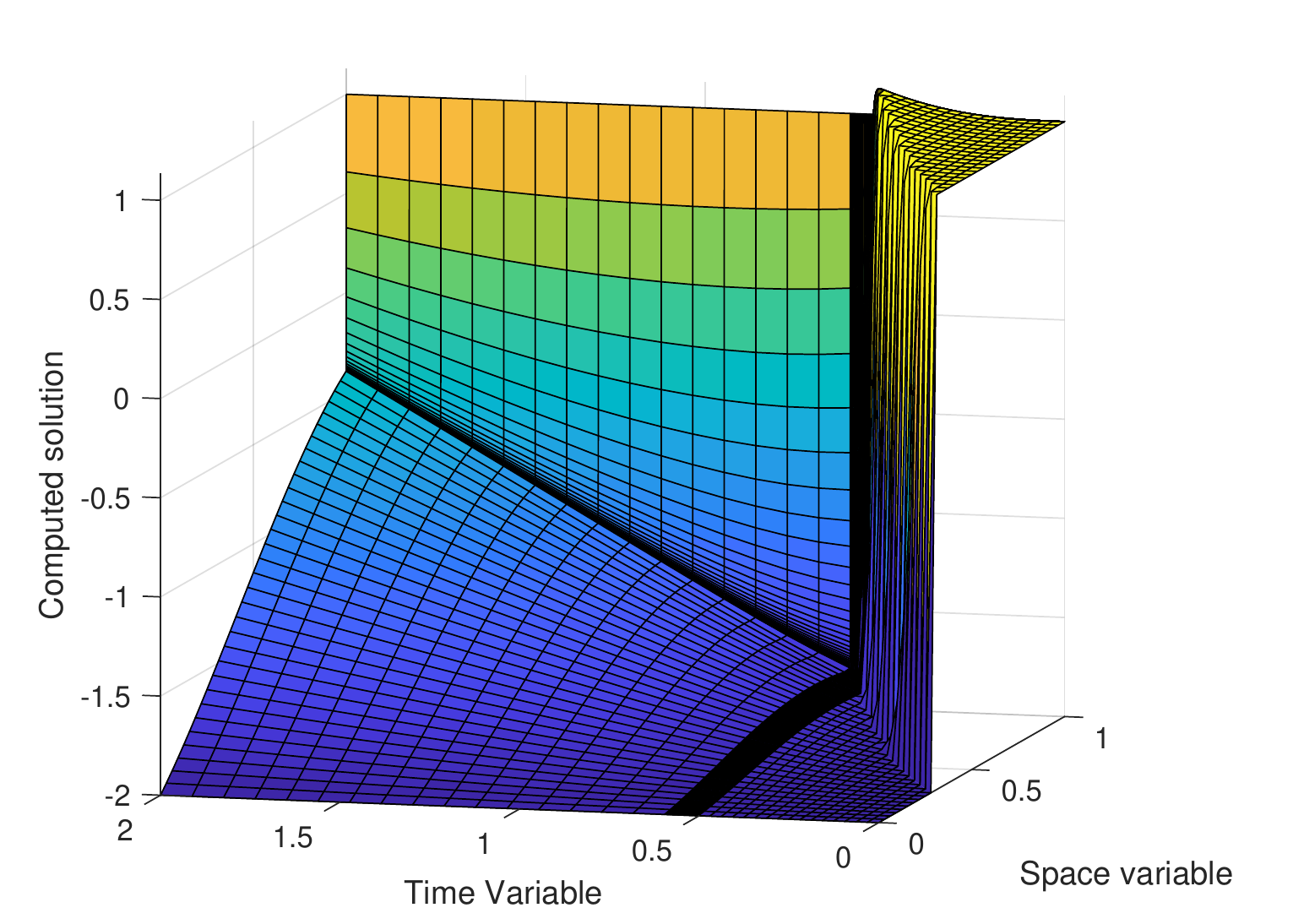}
		}
	\end{subfigure}
}
	\caption{Example~\ref{ex3}: Numerical approximations to $y$ and $u$ with $\vr=2^{-12}$ and $N=M=64$}
	\label{fig:ex3-solutions}
 \end{figure}

\begin{table}[h]
\caption{Example~\ref{ex3}: Maximum two-mesh global differences and orders of convergence for the function $y$}
\begin{center}{\tiny \label{tb:ex3}
\begin{tabular}{|c||c|c|c|c|c|c|c|}
 \hline  & N=M=32 & N=M=64 & N=M=128 & N=M=256 & N=M=512 & N=M=1024 & N=M=2048 \\
\hline \hline $\vr=2^{0}$
&2.899E-02 &3.735E-02 &1.233E-02 &4.263E-03 &1.804E-03 &8.821E-04 &4.364E-04 \\
&-0.365&1.599&1.532&1.241&1.032&1.015&
\\ \hline $\vr=2^{-2}$
&4.847E-02 &2.486E-02 &1.259E-02 &6.342E-03 &3.182E-03 &1.594E-03 &7.977E-04 \\
&0.963&0.981&0.990&0.995&0.997&0.999&
\\ \hline $\vr=2^{-3}$
&7.498E-02 &{\bf 5.263E-02 }&2.677E-02 &1.350E-02 &6.782E-03 &3.399E-03 &1.702E-03 \\
&0.511&0.975&0.988&0.993&0.996&0.998&
\\ \hline $\vr=2^{-4}$
&7.313E-02 &4.724E-02 &2.811E-02 &{\bf 1.670E-02} &{\bf 9.579E-03} &{\bf 5.411E-03 }&{\bf 3.145E-03} \\
&0.630&0.749&0.751&0.802&0.824&0.783&
\\ \hline $\vr=2^{-6}$
&7.543E-02 &4.653E-02 &2.755E-02 &1.637E-02 &9.407E-03 &5.330E-03 &2.973E-03 \\
&0.697&0.756&0.751&0.799&0.820&0.842&
\\ \hline $\vr=2^{-8}$
&7.872E-02 &4.619E-02 &2.731E-02 &1.621E-02 &9.305E-03 &5.274E-03 &2.942E-03 \\
&0.769&0.758&0.753&0.801&0.819&0.842&
\\ \hline $\vr=2^{-10}$
&7.993E-02 &4.718E-02 &2.746E-02 &1.617E-02 &9.282E-03 &5.260E-03 &2.934E-03 \\
&0.761&0.781&0.764&0.801&0.819&0.842&
\\ \hline $\vr=2^{-12}$
&7.996E-02 &4.778E-02 &2.797E-02 &1.616E-02 &9.278E-03 &5.256E-03 &2.932E-03 \\
&0.743&0.772&0.791&0.801&0.820&0.842&
\\ \hline $\vr=2^{-14}$
&{\bf 8.020E-02 }&4.796E-02 &2.821E-02 &1.619E-02 &9.278E-03 &5.256E-03 &2.932E-03 \\
&0.742&0.765&0.801&0.804&0.820&0.842&
\\
\hline &\vdots &\vdots &\vdots &\vdots &\vdots &\vdots &\vdots \\
&&&&&&&
\\ \hline $\vr=2^{-18}$
&8.008E-02 &4.806E-02 &{\bf 2.838E-02} &1.632E-02 &9.278E-03 &5.255E-03 &2.932E-03 \\
&0.736&0.760&0.798&0.815&0.820&0.842&
\\
\hline &\vdots &\vdots &\vdots &\vdots &\vdots &\vdots &\vdots \\
&&&&&&&
\\ \hline $\vr=2^{-26}$
&7.995E-02 &4.799E-02 &2.836E-02 &1.636E-02 &9.293E-03 &5.254E-03 &2.931E-03 \\
&0.736&0.759&0.794&0.816&0.823&0.842&
\\ \hline $D^{N,M}$
&8.020E-02 &5.263E-02 &2.838E-02 &1.670E-02 &9.579E-03 &5.411E-03 &3.145E-03 \\
$P^{N,M}$ &0.608&0.891&0.765&0.802&0.824&0.783&\\ \hline \hline
\end{tabular}}
\end{center}
\end{table}

\subsection{Extensions} \label{sec:numer-extension}

\begin{example} \label{ex4}
Consider the following text problem
\[
\begin{array}{l}
-\ve  u_{xx} + (1+t^2) u_x + u_t= 4x(1-x)t+t^2, \quad (x,t) \in (0,1)\times (0,0.5], \\
 u(x,0)=-2x, 0 \leq x < d, \quad    u(x,0)=1-x^2, \ d \leq x \leq 1,         \\
 u(0,t)= 4t^2, \quad     u(1,t) = t(t+0.5), \ 0 < t \leq 0.5; \\
d := \min \{0.3, \sqrt{\vr} \}.
\end{array}
\]
\end{example}
In this example $[\phi](d)\ne 0, \ [\phi ' ](d) \ne 0$, $d(t)=t+t^3/3+d$ and the interior and boundary  layers do not merge (see Figure~\ref{fig:ex4-solutions}.) The aim of this test problem is to show numerically that the analytical/numerical method proposed in this paper can also be applied when the distance of the discontinuity point $(d,0)$ of the initial condition to the point $(0,0)$ depends on the singular perturbation parameter   and $1> d \geq C\sqrt{\vr}$. Note that the method will fail if $d = \vr ^p, p> 0.5$.

The numerical results in Table~\ref{tb:ex4} suggest that the  numerical approximations converge globally and uniformly with order 0.5. For other test examples with $d=O( \sqrt{\vr}) $  but $[\phi ' ](d) = 0$, we have observed numerically that the numerical approximations converge with almost first order. The proof of these observed orders of  convergence  for this problem class (with $d=O( \sqrt{\vr}) $) is an open question.

 \begin{figure}[h!]
\centering
\resizebox{\linewidth}{!}{
	\begin{subfigure}[Approximation to $y$]{
		\includegraphics[scale=0.5, angle=0]{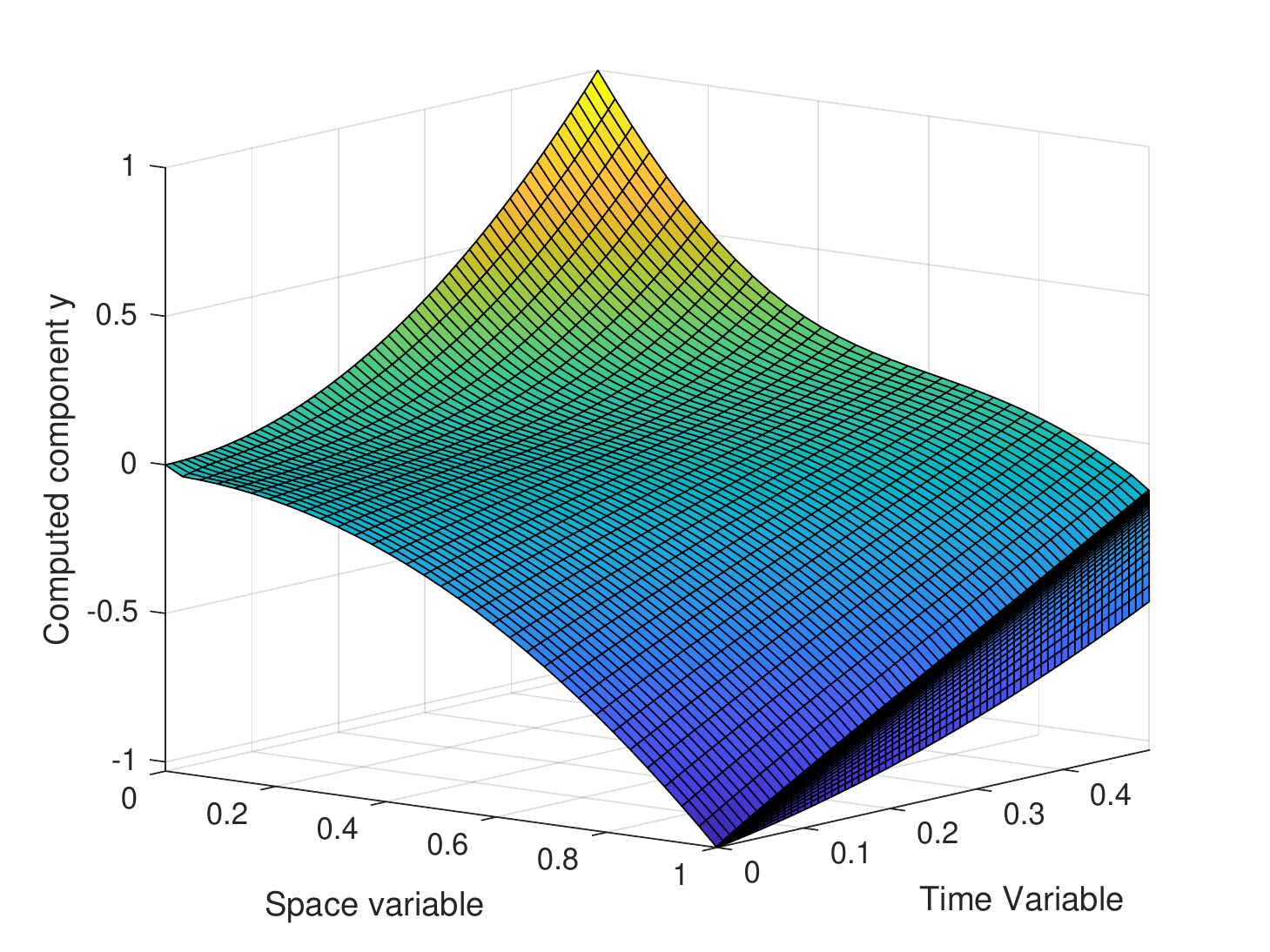}
		}
    \end{subfigure}
\begin{subfigure}[Approximation to $u$]{
		\includegraphics[scale=0.5, angle=0]{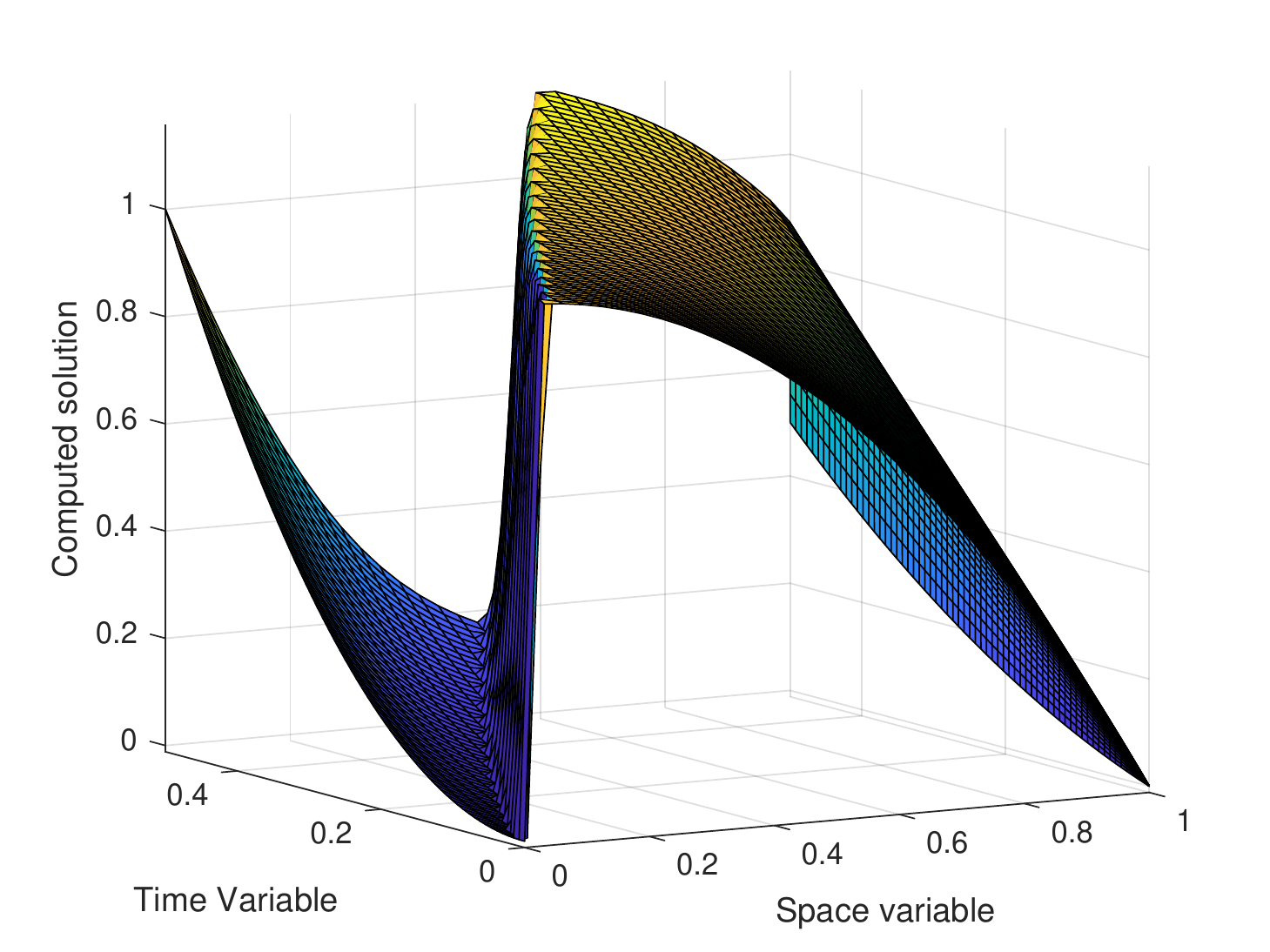}
		}
	\end{subfigure}
}
	\caption{ Example~\ref{ex4}: Numerical approximations to $y$ and $u$ with $\vr=2^{-12}$ and $N=M=64$}
	\label{fig:ex4-solutions}
 \end{figure}

\begin{table}[h]
\caption{Example~\ref{ex4}: Maximum two-mesh global differences and orders of convergence for the function $y$}
\begin{center}{\tiny \label{tb:ex4}
\begin{tabular}{|c||c|c|c|c|c|c|c|}
 \hline  & N=M=32 & N=M=64 & N=M=128 & N=M=256 & N=M=512 & N=M=1024 & N=M=2048 \\
\hline \hline $\vr=2^{0}$
&1.759E-02 &1.215E-02 &8.027E-03 &{\bf 6.110E-03} & {\bf 4.246E-03} & {\bf 3.134E-03} &{\bf 2.197E-03} \\
&0.534&0.598&0.394&0.525&0.438&0.512&
\\ \hline $\vr=2^{-2}$
&7.582E-03 &5.363E-03 &3.499E-03 &2.916E-03 &1.988E-03 &1.534E-03 &1.065E-03 \\
&0.500&0.616&0.263&0.553&0.374&0.526&
\\ \hline $\vr=2^{-4}$
&1.518E-02 &7.788E-03 &3.925E-03 &2.172E-03 &1.249E-03 &7.257E-04 &5.858E-04 \\
&0.963&0.988&0.854&0.798&0.784&0.309&
\\ \hline $\vr=2^{-6}$
&2.646E-02 &1.492E-02 &8.494E-03 &4.779E-03 &2.652E-03 &1.455E-03 &7.905E-04 \\
&0.826&0.813&0.830&0.849&0.866&0.880&
\\ \hline $\vr=2^{-8}$
&3.636E-02 &1.969E-02 &1.076E-02 &5.774E-03 &3.540E-03 &2.057E-03 &1.122E-03 \\
&0.885&0.872&0.898&0.706&0.783&0.875&
\\ \hline $\vr=2^{-10}$
&4.039E-02 &2.084E-02 &1.121E-02 &5.998E-03 &3.755E-03 &2.668E-03 &1.717E-03 \\
&0.954&0.894&0.903&0.676&0.493&0.635&
\\ \hline $\vr=2^{-12}$
&4.181E-02 &2.112E-02 &1.132E-02 &6.059E-03 &3.255E-03 &2.252E-03 &1.783E-03 \\
&0.985&0.900&0.901&0.896&0.532&0.337&
\\ \hline $\vr=2^{-14}$
&4.236E-02 &2.121E-02 &1.134E-02 &6.073E-03 &3.265E-03 &1.753E-03 &1.250E-03 \\
&0.998&0.903&0.901&0.895&0.897&0.488&
\\
\hline &\vdots &\vdots &\vdots &\vdots &\vdots &\vdots &\vdots \\
&&&&&&&
\\ \hline $\vr=2^{-26}$
&{\bf 4.281E-02} & {\bf 2.127E-02} & {\bf 1.135E-02} &6.078E-03 &3.268E-03 &1.755E-03 &9.405E-04 \\
&1.009&0.906&0.901&0.895&0.897&0.900&
\\ \hline $D^{N,M}$
&4.281E-02 &2.127E-02 &1.135E-02 &6.110E-03 &4.246E-03 &3.134E-03 &2.197E-03 \\
$P^{N,M}$ &1.009&0.906&0.894&0.525&0.438&0.512&\\ \hline \hline
\end{tabular}}
\end{center}
\end{table}

\begin{example} \label{ex5}
Finally, we consider the following  test problem
\[
\begin{array}{l}
-\ve  u_{xx} + (1+x^2) u_x + u_t= 4x(1-x)t+t^2, \quad (x,t) \in (0,1)\times (0,0.5], \\
 u(x,0)=-2, 0 \leq x < 0.1, \quad    u(x,0)=1, \ 0.1 \leq x \leq 1,         \\
 u(0,t)=-2, \quad    u(1,t) =1, \ 0 < t \leq 0.5.
\end{array}
\]
\end{example}
In this example $[\phi](0.1)=3\ne 0, \ [\phi ' ](d)=0$ and the interior and boundary  layers do not merge. Note that the convective coefficient depends on the space variable, while our error analysis assumes that $a=a(t).$  The characteristic curve $\Gamma^*$ is the solution of the initial value problem $d'(t)=(1+d^2(t))$, with $d(0)=0.1$, which is given by $d(t)=(0.1+\tan t)/(1-0.1 \tan t).$

 In Table~\ref{tb:ex5} we observe that the  numerical method is not a parameter-uniform method when $a$ depends  on the space variable. In order to design a uniformly convergent method in the case where $ a=a(x,t)$, a more sophisticated scheme is required \cite{arxiv}.

\begin{table}[h]
\caption{Example~\ref{ex5}: Maximum two-mesh global differences and orders of convergence for the function $y$}
\begin{center}{\tiny \label{tb:ex5}
\begin{tabular}{|c||c|c|c|c|c|c|c|}
 \hline  & N=M=32 & N=M=64 & N=M=128 & N=M=256 & N=M=512 & N=M=1024 & N=M=2048 \\
\hline \hline $\vr=2^{0}$
&1.978E-01 &6.835E-02 &3.367E-02 &4.361E-02 &1.478E-02 &4.979E-03 &2.029E-03 \\
&1.533&1.021&-0.373&1.561&1.570&1.295&
\\ \hline $\vr=2^{-2}$
&2.811E-02 &3.521E-02 &1.235E-02 &4.127E-03 &1.802E-03 &8.806E-04 &4.355E-04 \\
&-0.325&1.511&1.581&1.196&1.033&1.016&
\\ \hline $\vr=2^{-4}$
&1.387E-02 &6.663E-03 &3.219E-03 &1.529E-03 &7.176E-04 &3.341E-04 &1.532E-04 \\
&1.058&1.050&1.074&1.092&1.103&1.125&
\\ \hline $\vr=2^{-10}$
&1.580E-01 &9.484E-02 &4.791E-02 &2.418E-02 &1.215E-02 &6.088E-03 &3.046E-03 \\
&0.737&0.985&0.987&0.993&0.996&0.999&
\\ \hline $\vr=2^{-12}$
&6.620E-01 &2.759E-01 &1.172E-01 &5.656E-02 &2.859E-02 &1.431E-02 &7.158E-03 \\
&1.263&1.236&1.051&0.984&0.999&0.999&
\\ \hline $\vr=2^{-14}$
&1.391E+00 &6.782E-01 &2.748E-01 &1.113E-01 &6.205E-02 &3.102E-02 &1.553E-02 \\
&1.036&1.303&1.304&0.843&1.000&0.998&
\\ \hline $\vr=2^{-15}$
&{\bf 1.409E+00 }&{\bf 7.023E-01 }&3.482E-01 &1.451E-01 &8.928E-02 &4.509E-02 &2.255E-02 \\
&1.004&1.012&1.263&0.701&0.985&1.000&
\\ \hline $\vr=2^{-19}$
&4.540E-03 &2.705E-03 &{\bf 1.560E+00 }&4.250E-01 &1.997E-01 &2.022E-01 &9.535E-02 \\
&0.747&-9.171&1.876&1.089&-0.018&1.085&
\\ \hline $\vr=2^{-20}$
&4.540E-03 &2.705E-03 &1.152E+00 &{\bf 4.582E-01} &6.427E-01 &3.221E-01 &1.370E-01 \\
&0.747&-8.734&1.330&-0.488&0.996&1.234&
\\ \hline $\vr=2^{-23}$
&4.540E-03 &2.705E-03 &1.576E-03 &9.104E-04 &{\bf 1.602E+00} &{\bf 8.004E-01} &3.727E-01 \\
&0.747&0.779&0.792&-10.781&1.001&1.103&
\\ \hline $\vr=2^{-26}$
&4.540E-03 &2.705E-03 &1.576E-03 &9.105E-04 &5.055E-02 &2.521E-02 &{\bf 1.559E+00} \\
&0.747&0.779&0.792&-5.795&1.003&-5.950&
\\ \hline $D^{N,M}$
&1.409E+00 &7.023E-01 &1.560E+00 &4.582E-01 &1.602E+00 &8.004E-01 &1.559E+00 \\
$P^{N,M}$ &1.004&-1.151&1.767&-1.806&1.001&-0.961&\\ \hline \hline
\end{tabular}}
\end{center}
\end{table}

\section{Appendix: A set of singular functions}

Below, we define a set of functions $\{  \psi _i \} _{i=0}^4$ such that $ L \psi _i=0$; $  \psi _i \in  C^{i-1+\gamma } (\bar {  Q}), \ i \geq 1$ and each function $ \psi _i$ is smooth within the open region $  Q \setminus \Gamma ^*$.

Define the two singular functions  (see  \cite[(10)]{bobisud})
\begin{equation}\label{zero}
 \psi _0(x,t) := \erfc \left ( \frac{d(t)-x}{2\sqrt{\ve t}} \right) ,\quad  E(x,t) := e^{-\frac{(x-d(t))^2}{4\ve t}}.
\end{equation}
Then we explicitly write out the derivatives of these two functions
\begin{eqnarray*}
\frac{\partial  \psi _0}{\partial x} &=&  \frac{1}{\sqrt{\ve \pi t}}  E  , \quad  \frac{\partial  E}{\partial x} =  \frac{d(t)-x}{2\ve t}  E,\\
\ve \frac{\partial ^2 \psi _0}{\partial x ^2} &=&  \frac{d(t)-x}{2t\sqrt{\ve \pi t}}  E, \quad \ve \frac{\partial ^2  E}{\partial x ^2} = \Bigl( \frac{(d(t)-x)^2}{2\ve  t} -1 \Bigr)\frac{ E}{2t},\\\\
\frac{\partial  \psi _0}{\partial t} &=&  \frac{1}{\sqrt{\ve \pi t}}  \Bigl(\frac{(d(t)-x)-2ta}{2 t} \Bigr) E,
\\
\frac{\partial  E}{\partial t} &=&  \frac{(d(t)-x)}{2\ve t}  \Bigl(\frac{(d(t)-x)}{2t} -a(t)\Bigr) E = \frac{\sqrt{ \pi }(d(t)-x)}{2\sqrt{\ve  t}}\frac{\partial  \psi _0}{\partial t}.
\end{eqnarray*}
Hence,
$
 L \psi _0=0.
$
We define the continuous function
\begin{equation}\label{one}
 \psi _1(x,t) := (d(t)-x) \psi _0 -2\frac{\sqrt{\ve t}}{\sqrt{\pi}}  E,
\end{equation}
with
\[
 \frac{\partial  \psi _1}{\partial t} = a(t) \psi _0 -\frac{\sqrt{\ve }}{\sqrt{\pi t}}  E; \quad
\frac{\partial  \psi _1}{\partial x} = -  \psi _0 \quad \hbox{so that}\quad  L \psi _1\equiv 0.
\]
We now  define the remaining functions:
\begin{equation}\label{singular-functions}
 \psi _i := (d(t)-x)  \psi _{i-1}+2\ve t (i-1)  \psi _{i-2},\qquad  i=2,3,4;
\end{equation}
and we can check that  for $ i=1,2,3,4$
\begin{align*}
\frac{\partial  \psi _i}{\partial x} = -i  \psi _{i-1},\ L\psi _i =0 \quad \hbox{and} \quad \psi _i \in C^{i-1+\gamma} (\bar {  Q)}.
\end{align*}
Either side of $x=d$, we have the Taylor expansions for the initial condition
\[
\phi (x) =   \sum _{i=0}^4 \phi ^{(i)} (d^*)   \frac{(x-d)^i}{i!} +   R_0(x),\quad d^*= \Bigl \{ \begin{array}{ll} d^- \quad \hbox{ if } x <  d \\ d^+ \quad \hbox{ if } x>d \end{array} ;
\]
 with $  R_0(x) \in C^4(0,1)$. Hence,
we present the following expansion
\begin{equation}\label{expansion}
 u (x,t)= 0.5 \sum _{i=0}^4 [\phi ^{(i)} ] (d) \frac{(-1)^i}{i!}  \psi _i  (x,t) +  R (x,t).
\end{equation}
Note that  for $ i=1,2,3,4$
\[
\frac{\partial ^i  \psi _i}{\partial x^i} = (-1)^ii!  \psi _{0} \quad \hbox{and} \quad [ \psi _{0} ] (d)=2,
\]
which implies that $[ u ^{(i)} ] (d,0)=[\phi ^{(i)} ] (d)$.

Define the paramaterized exponential function
\[
 E_\gamma (x,t) := e^{-\frac{\gamma (x-d(t))^2}{4\ve t}},\qquad  0 < \gamma < 1.
\]
 Using the inequality $\erfc(z) \leq C e^{-z^2} \leq C e^{\gamma ^2/4}e^{-\gamma z},  \forall z \ge 0 $
it follows that
\begin{subequations}\label{bounds-singular-functions}
\begin{align}
& \vert  \psi _0(x,t) \vert  \leq C, \\
&  \Bigl \vert \frac{\partial ^j }{\partial t ^j }  \psi _0(x,t)\Bigr \vert, \Bigl \vert \frac{\partial ^j }{\partial t ^j }  E(x,t)\Bigr \vert   \leq  C
 \left( \frac{1}{t} +\frac{1}{\sqrt {\ve t}}\right)^j E_\gamma (x,t);\quad j=1,2; \label{bound-A}\\
& \Bigl \vert \frac{\partial ^i }{\partial x ^i }  \psi _0(x,t)\Bigr \vert, \Bigl \vert \frac{\partial ^i }{\partial x^i }  E(x,t)\Bigr \vert     \leq C\left(\frac{1}{\sqrt {\ve t}}\right) ^{i} E_\gamma (x,t), \ 1\leq i \leq 4;\label{bound-B}\\
& \Bigl \vert \frac{\partial  }{\partial t  }  \psi _1(x,t)\Bigr \vert \leq   C  \left(1 +\sqrt {\frac{\ve}{ t}}\right)  E (x,t)+C, \label{bound-C}\\
& \Bigl\vert \frac{\partial ^2 }{\partial t ^2 }  \psi _1(x,t)\Bigr \vert    \leq   C\left (  \frac1{t}+\frac{1}{t}\sqrt{\frac{\ve}{t}}+\frac1{\sqrt{\vr t}} \right) E_\gamma (x,t)+C, \label{bound-D}
 \\
& \vert \psi _1(x,t) \vert
 \leq C, \, \,   \Bigl \vert \frac{\partial ^i }{\partial x ^i } \psi _1(x,t) \Bigr \vert    \leq  C\left(\frac{1}{\sqrt {\ve t}}\right)^{i-1} E_\gamma (x,t), \, 1 \leq i \leq 4. \label{bound-E}
\end{align}
\end{subequations}

For the next terms, we can also establish the bounds
\begin{subequations}\label{more-bounds-singular-functions}
\begin{equation}
\Bigl \vert \frac{\partial  }{\partial t  }  \psi _2(x,t)\Bigr \vert \leq C,\quad \Bigl \vert
\frac{\partial ^2 }{\partial x ^2 }   \psi _2(x,t)\Bigr \vert \leq  C;
\end{equation}
 on the second time derivatives
\begin{align}
\Bigl \vert \frac{\partial ^2 }{\partial t ^2 }  \psi _2(x,t)\Bigr \vert  &\leq  C
\ve \left( \frac{1}{t} +\frac{1}{\sqrt {\ve t}}+ \frac{1}{\ve} \right) E_\gamma (x,t) +C \nonumber \\
& \leq
C \left (1+ \frac{\ve}{t} \right) E_\gamma (x,t) +C, \\
\Bigl \vert \frac{\partial ^2 }{\partial t ^2 }  \psi _3(x,t)\Bigr \vert   &\leq
C \sqrt{\ve t} \left(1+ \frac{\ve}{t}\right) E_\gamma (x,t) +C
, \\
\Bigl \vert \frac{\partial ^2 }{\partial t ^2 }  \psi _4(x,t)\Bigr \vert  &\leq  C
\ve  E_\gamma (x,t)+C
\end{align}
 on the fourth space derivatives
\begin{equation}
 \Bigl \vert \frac{\partial ^4 }{\partial x^4 }  \psi _j(x,t)  \Bigr \vert \leq C(\sqrt{\ve t})^{j-4}  E_\gamma (x,t)+C, \quad j=2,3,4
\end{equation}
and on the third space derivatives
\begin{equation}
 \Bigl \vert \frac{\partial ^3 }{\partial x^3 }  \psi _2(x,t)  \Bigr \vert \leq  C
 \frac{1}{\sqrt {\ve t}}  E_\gamma (x,t)+C, \quad
\Bigl \vert \frac{\partial ^3 }{\partial x ^3 }  \psi _3(x,t)\Bigr \vert  , \Bigl \vert \frac{\partial ^3 }{\partial x ^3 }  \psi _4(x,t)  \Bigr \vert \leq  C.
\end{equation}
\end{subequations}

\end{document}